\date{July 1, 2011}
\newcommand{\vect}[1]{\boldsymbol{#1}}
\renewcommand{\L}{\mathcal{L}}
\newcommand{\D}{\mathbb{D}}
\newcommand{\DD}{\mathcal{D}}
\newcommand{\C}{\mathbb{C}}
\newcommand{\R}{\mathbb{R}}
\newcommand{\A}{\mathcal{A}}
\newcommand{\SL}{\operatorname{SL}}
\newcommand{\Herm}{\operatorname{Herm}}
\newcommand{\SU}{\operatorname{SU}}
\newcommand{\PSL}{\operatorname{PSL}}
\newcommand{\id}{\operatorname{id}}
\newcommand{\diam}{\operatorname{diam}}
\newcommand{\Length}{\operatorname{Length}}
\renewcommand{\Re}{\operatorname{Re}}
\newcommand{\trace}{\operatorname{trace}}
\newcommand{\trans}[1]{\vphantom{#1}^t#1}
\newcommand{\dist}{\operatorname{dist}}
\newcommand{\imag}{\mathrm{i}}
\renewcommand{\theenumi}{(\arabic{enumi})}
\renewcommand{\labelenumi}{(\arabic{enumi})}
\numberwithin{equation}{section}
\newtheorem{theorem}{Theorem}[section]
\newtheorem{lemma}[theorem]{Lemma}
\newtheorem{proposition}[theorem]{Proposition}
\newtheorem*{maintheorem*}{Main Theorem}
\newtheorem*{mainlemma*}{Main Lemma}
\newtheorem*{keylemma*}{Key Lemma}
\theoremstyle{definition}
\newtheorem{remark}[theorem]{Remark}
\newtheorem*{remark*}{Remark}
\newtheorem{definition}[theorem]{Definition}
\title[Calabi-Yau Problem for Legendrian Curves]{%
   Calabi-Yau Problem for
   Legendrian curves in $\C^3$\\
   and applications
}%
\author[F. Martin]{Francisco Mart\'\i{}n}
\address[Martin]{
  Departamento de Geometr\'\i{}a y Topolog\'\i{}a,
  Universidad de Granada,
  18071 Granada, Spain.
}
\email{fmartin@ugr.es}
\author[M. Umehara]{Masaaki Umehara}
\address[Umehara]{%
   Department of Mathematical and Computing Sciences,
   Tokyo Institute of Technology
   2-12-1-W8-34, O-okayama, Meguro-ku,
   Tokyo 152-8552, Japan.
}
\email{umehara@is.titech.ac.jp}
\author[K. Yamada]{Kotaro Yamada}
\address[Yamada]{%
   Department of Mathematics,
   Tokyo Institute of Technology
   2-12-1-H-7, O-okayama, Meguro-ku,
   Tokyo 152-8551, Japan.
}
\email{kotaro@math.titech.ac.jp}
\thanks{
  The first author is partially
  supported by MEC-FEDER Grant no. MTM2007 - 61775 and a Regional
  J. Andaluc\'\i a Grant no. P09-FQM-5088.
  The second and the third authors  are 
  partially supported by Grant-in-Aid for 
  Scientific Research (A) No.~22244006, 
  and Scientific Research (B) No.~21340016,
  respectively, from the Japan Society for the Promotion of Science.
}
\subjclass[2010]{%
  Primary 53D10; %Contact manifolds
  Secondary 53A10, % Minimal Submanifolds
            53A15, % Affine Differential Geometry
	    53A35. % Non-Euclidean Differential Geometry
}
\begin{document}
\maketitle
\begin{abstract}
We  construct a complete, bounded Legendrian immersion in $\C^3$. 
As direct applications of it, we show the first examples of
a weakly complete bounded flat front in hyperbolic $3$-space,
a weakly complete bounded flat front in de Sitter $3$-space,
and a weakly complete bounded improper affine front in $\R^3$.
\end{abstract}

\section{Introduction}
In a series of previous papers, the authors have constructed
the first examples of complete bounded null holomorphic immersion
\[
   \nu:\D_1\longrightarrow \C^3
\]
of the unit disc $\D_1\subset\C$, 
where {\it null\/} means that $\nu_z\cdot \nu_z$ vanishes
identically, here $\nu_z:=d\nu/dz$ is the derivative of 
$\nu$ with respect to the complex coordinate $z$ of $\D_1$
and the dot denotes the canonical complex bilinear form.
The existence of such an immersion has important consequences.
Actually, 
as a short and direct application of the main result in \cite{MUY}, by using
different kinds of transformations,
the following objects were constructed:
\begin{enumerate}
\item complete bounded minimal surfaces in  the Euclidean 3-space $\R^3$
      (\cite[Theorem A]{MUY}),
\item complete bounded holomorphic curves in $\C^2$ 
      (\cite[Corollary~B]{MUY}).
\item weakly complete bounded maximal surfaces in the Lorentz-Minkowski
      3-space $\R_1^3$
      (\cite[Corollary D]{MUY}),
\item complete bounded constant mean curvature
      one surfaces in the hyperbolic 3-space $H^3$
      (\cite[Theorem C]{MUY}).
\end{enumerate}
Moreover, we constructed 
higher genus examples of the first three objects
in \cite{MUY2}. 
Recently, Alarc\'on and L\'opez
\cite{AL} have constructed
a complete bounded null proper holomorphic immersion
of a given Riemann surface of an arbitrary topology
into a convex domain in $\C^3$ (see also \cite{AL2}). 
Their method is different from ours. 

It is known that null curves in $\C^3$ are
closely related to Legendrian curves in $\C^3$ 
(cf.\ Bryant \cite{B} and also Ejiri-Takahashi \cite{ET}
for the corresponding $\SL(2,\C)$-case).
In this paper, we use the techniques develop by the authors
in \cite{MUY} to produce
a complete bounded Legendrian holomorphic immersion
\[
    F:\D_1\longrightarrow \C^3.
\]
Recall that $F$ is called {\it Legendrian\/} 
if the pull-back of the canonical contact form 
\begin{equation}\label{eq:contact-c}
    \Omega_{\C}:=dx_3+x_2dx_1
\end{equation}
by $F$ vanishes, where $(x_1,x_2,x_3)$ is the canonical
complex coordinate system of $\C^3$.
The existence of such an $F$ is non-trivial,
since the correspondences between null curves and Legendrian curves
given in \cite{B} and \cite{ET} seem not to preserve neither
boundedness nor completeness.
Also, the authors do not know
whether
the method in \cite{AL}
can be applied for
Legendrian holomorphic immersions
by using a suitable modification or not. 

As applications, we are able to construct the following new examples: 
\begin{enumerate}
\item a weakly complete bounded flat front
      in $H^3$ (Theorem~\ref{thm:flat-front}),
\item a weakly complete bounded flat front
      in the de Sitter 3-space $S^3_1$
      (Theorem~\ref{thm:de-sitter}),
\item a weakly complete bounded improper affine 
      front in $\R^3$
      (Theorem~\ref{thm:affine}).
\end{enumerate}
It should be remarked that there are no
compact flat fronts in $H^3$ and $S^3_1$
(resp. improper affine fronts in $\R^3$). 
See Remark \ref{rem:cpt}.
A holomorphic map $E\colon{}\D_1\to\SL(2,\C)$ is called 
{\em Legendrian\/}
if the pull-back $E^*\Omega_{\SL}$ vanishes on $\D_1$, 
where $\Omega_{\SL}$ is the complex contact form on $\SL(2,\C)$
defined as 
\begin{equation}\label{eq:contact-sl}
    \Omega_{\SL}:=x_{11}dx_{22}-x_{12}dx_{21}.
\end{equation}
Here, elements in $\SL(2,\C)$ are represented by matrices
$(x_{ij})_{i,j=1,2}$. 
A holomorphic immersion $E\colon{}\D_1\to\SL(2,\C)$
is said to be {\em complete\/} if 
the pull-back metric $E^*g_{\SL}$
of the canonical Hermitian metric 
$g_{\SL}$ on $\SL(2,\C)$
is complete, see \eqref{eq:induced-metric}.

To construct the bounded holomorphic immersion 
$F:\D_1\to \C^3$, we show the following
\begin{maintheorem*}
 There exists a complete holomorphic Legendrian immersion 
 of the unit disk  $\D_1\subset\C$ into $\SL(2,\C)$ 
 such that its image is contained an arbitrary bounded domain
 in $\SL(2,\C)$.
\end{maintheorem*}
By Darboux's theorem, the contact structure of $\SL(2,\C)$
is locally Legendrian equivalent to that of $\C^3$.
Moreover, the following explicit transformation
 \[
    F\colon{}\C^3\ni (x,y,z)\longmapsto
            \begin{pmatrix}
	     e^{-z} & x e^{-z}\\
	     y e^z & e^z(1+xy)
	    \end{pmatrix}\in\SL(2,\C)
 \]
maps holomorphic contact curves in $\C^3$
to those in $\SL(2,\C)$.
Then if one take a complete Legendrian immersion of $\D_1$ 
into $\SL(2,\C)$ with sufficiently small image in $\SL(2,\C)$,
a Legendrian immersion into $\C^3$ is obtained.
Completeness follows from the same argument as 
\cite[Lemma 3.1]{MUY}.
In fact, since the image is bounded, the metrics
induced from $\SL(2,\C)$ and $\C^3$ are equivalent.

The paper is organized as follows:
In Section~\ref{sec:mainlemma}, we establish our formulations
and state the key-lemma to prove the main theorem,
which is proved in Section \ref{sec:proof}.
In Section \ref{sec:appl}, we give the applications as above.
In the appendix, we prepare a Runge-type theorem for Legendrian curves
in $\SL(2,\C)$ which is needed in Section~\ref{sec:proof}.

Finally, we mention the corresponding real problem,
that is, the existence of complete bounded  
Legendrian submanifolds immersed in $\R^{2n+1}$.
When $n=1$, there exists a
closed Legendrian curve immersed in 
an arbitrarily given open subset in $\R^3$: 
In fact, in \cite[Section 2]{KitU}, it is shown the existence of
a Legendrian curve 
contained in an arbitrary given open ball
of $P^3=T_1S^2$
(i.e. the unit cotangent bundle of $2$-sphere)
as a lift of an eye-figure curve.
Since any contact structure is locally rigid,
it gives an existence of
a closed Legendrian curve immersed in 
any ball of $\R^3$. 
Also, as an application of our construction,
we can construct a complete bounded Legendrian immersion
$L:\D_1\to B(\subset \R^5)$:
There exists a canonical projection (cf. \cite[Page 159]{KUY})
\[
   \pi:\SL(2,\C)\longrightarrow T_1^*H^3,
\]
where $T_1^*H^3$ is a unit cotangent bundle
of the hyperbolic 3-space $H^3$.
Then the projection of
our complete bounded holomorphic Legendrian
curve gives a
complete bounded Legendrian submanifold
immersed in 
an arbitrarily given open subset of
$T_1^*H^3$.
By Darboux's rigidity theorem,
this implies the existence of
a complete Legendrian immersion
$L:\D_1\to B$, 
where $B$ is an arbitrary ball
in $\R^5$.

\section{The Main Lemma}\label{sec:mainlemma}
In this section, we state the main lemma, which is 
an analogue of \cite[Main Lemma in page 121]{MUY}.
The main theorem in the introduction can be obtained as 
a direct conclusion of the main lemma in the same way
as in \cite{MUY}.
\subsection{Preliminaries}
We denote
$\imag=\sqrt{-1}$ and
\[
\D_r:=\{z\in\C\,;\,|z|<r\},\qquad
    \overline{\D}_r:=
    \D_r:=\{z\in\C\,;\,|z|\leq r\}
\]
for a positive number $r$.
Throughout this paper, 
the prime ${}'$ means the derivative with respect to 
the complex coordinate $z$ on $\C$.
\begin{proposition}\label{prop:weierstrass}
 A holomorphic immersion $X\colon{}\overline{\D}_1\to \SL(2,\C)$ 
 is Legendrian if and only if 
 $X^{-1}X'$ is anti-diagonal{\rm;}
 \begin{equation}\label{eq:weierstrass-data}
		\psi_X\,dz:=
		X^{-1}dX = \begin{pmatrix}
			      0 & \theta \\
			     \omega & 0 
			    \end{pmatrix}
                         = \frac{1}{\sqrt{2}}
			   \begin{pmatrix}
			     0 & \varphi_1 + \imag \varphi_2 \\
			     \varphi_1-\imag\varphi_2 & 0
			   \end{pmatrix}\,dz,
 \end{equation}
 where $\varphi_1$ and $\varphi_2$ are holomorphic
 functions on $\overline{\D}_1$.
 The metric 
induced by $X$ from the canonical Hermitian
 metric $g_{\SL}$
of $\SL(2,\C)$ is represented as
 \begin{equation}\label{eq:induced-metric}
    ds^2_X:=|\omega|^2+|\theta|^2=
          \bigl(|\varphi_1|^2+|\varphi_2|^2\bigr)|dz|^2.
 \end{equation}
 In particular, $\varphi_1$ and $\varphi_2$ have no 
 common zeros on $\overline{\D}_r$.
\end{proposition}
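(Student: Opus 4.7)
The plan is to reduce everything to a direct computation with the left Maurer-Cartan form. Since $\det X\equiv 1$, the $\sl(2,\C)$-valued form $\psi_X\,dz:=X^{-1}dX$ is automatically trace-free, so ``anti-diagonal'' reduces to the single scalar condition that one of its diagonal entries vanishes. The equivalence to be proved therefore amounts to identifying this diagonal entry, up to sign, with the pulled-back contact form $X^*\Omega_{\SL}$.

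Writing $X=(x_{ij})$ and using $X^{-1}=\bigl(\begin{smallmatrix}x_{22}&-x_{12}\\-x_{21}&x_{11}\end{smallmatrix}\bigr)$, one reads off the diagonal entries of $X^{-1}dX$ as explicit bilinear expressions in the $x_{ij}$ and the $dx_{ij}$, and similarly expands $X^*\Omega_{\SL}=(x_{11}x_{22}'-x_{12}x_{21}')\,dz$. The needed identification then comes by differentiating the determinant relation $x_{11}x_{22}-x_{12}x_{21}\equiv 1$, which furnishes exactly the linear dependence among the $dx_{ij}$ needed to convert one side to the other. This proves the first equivalence. The equivalent parametrization in \eqref{eq:weierstrass-data} is merely the unitary change of frame $\theta=(\varphi_1+\imag\varphi_2)\,dz/\sqrt{2}$, $\omega=(\varphi_1-\imag\varphi_2)\,dz/\sqrt{2}$, and the pointwise identity $|\varphi_1+\imag\varphi_2|^2+|\varphi_1-\imag\varphi_2|^2=2(|\varphi_1|^2+|\varphi_2|^2)$ delivers the second equality in \eqref{eq:induced-metric}.

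For the first equality in \eqref{eq:induced-metric} I would invoke the left-invariance of the canonical Hermitian metric $g_{\SL}$ on $\SL(2,\C)$: this reduces $X^*g_{\SL}$ to the inner product at the identity applied to the $\sl(2,\C)$-valued form $\psi_X\,dz$, which with the standard normalization is $\trace(\psi_X\psi_X^*)\,|dz|^2$. Substituting the anti-diagonal $\psi_X$ gives $|\theta|^2+|\omega|^2$ directly. Finally, the no-common-zeros claim is forced by the immersion hypothesis: a simultaneous zero of $\varphi_1,\varphi_2$ at some $z_0\in\overline{\D}_1$ would give $\psi_X(z_0)=0$ and hence $X'(z_0)=X(z_0)\psi_X(z_0)=0$, contradicting the immersivity of $X$. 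The main obstacle is the algebraic identification in the first step: once the $\SL(2,\C)$ relation is used to convert the Maurer-Cartan diagonal into the pulled-back contact form, everything else follows mechanically from the anti-diagonal normal form together with the left-invariance of $g_{\SL}$.
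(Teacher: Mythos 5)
Most of your argument is sound and is essentially the standard verification that the paper leaves implicit: trace-freeness of $X^{-1}dX$ from $\det X\equiv 1$, the unitary rotation relating $(\omega,\theta)$ to $(\varphi_1,\varphi_2)$ together with $|\varphi_1+\imag\varphi_2|^2+|\varphi_1-\imag\varphi_2|^2=2\bigl(|\varphi_1|^2+|\varphi_2|^2\bigr)$, left-invariance of $g_{\SL}$ giving $ds^2_X=\trace(\psi_X\psi_X^*)\,|dz|^2$, and the immersion hypothesis forcing $\varphi_1,\varphi_2$ to have no common zero. All of that is correct.

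The gap is in your first step, which is the only nontrivial point of the proposition: the claimed identification of a diagonal entry of $X^{-1}dX$ with $\pm X^*\Omega_{\SL}$ via the determinant relation does not close as you describe. Writing $X=(x_{ij})$, one reads off
\[
 (X^{-1}dX)_{22}=x_{11}\,dx_{22}-x_{21}\,dx_{12},\qquad
 (X^{-1}dX)_{11}=x_{22}\,dx_{11}-x_{12}\,dx_{21},
\]
and differentiating $x_{11}x_{22}-x_{12}x_{21}\equiv1$ yields exactly $(X^{-1}dX)_{11}+(X^{-1}dX)_{22}=0$, i.e.\ the trace-freeness you already used, and nothing more; it does not convert either diagonal entry into $x_{11}\,dx_{22}-x_{12}\,dx_{21}$, since the discrepancy $x_{21}\,dx_{12}-x_{12}\,dx_{21}$ survives. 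Concretely, $X(z)=\left(\begin{smallmatrix}1&1\\ z&1+z\end{smallmatrix}\right)$ satisfies $x_{11}\,dx_{22}-x_{12}\,dx_{21}=dz-dz=0$, yet $X^{-1}X'=\left(\begin{smallmatrix}-1&-1\\ 1&1\end{smallmatrix}\right)$ is not anti-diagonal. What is true is that $x_{11}\,dx_{22}-x_{12}\,dx_{21}$ is the $(2,2)$-entry of the right-invariant form $dX\,X^{-1}$, whereas the $(2,2)$-entry of $X^{-1}dX$ is the pullback of $x_{11}\,dx_{22}-x_{21}\,dx_{12}$. So to prove the proposition in the form stated you must either carry out the computation with $dX\,X^{-1}$, or (what is clearly intended, since \eqref{eq:weierstrass-data} and \eqref{eq:hyp-metric} are built on $X^{-1}dX$) take the contact form to be $x_{11}\,dx_{22}-x_{21}\,dx_{12}$, i.e.\ observe that the indices in \eqref{eq:contact-sl} are transposed relative to the matrix convention used in Proposition~\ref{prop:weierstrass}. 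Your write-up should make this identification explicit rather than attributing it to the determinant relation, which here produces only the trace condition and is a red herring.
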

The holomorphic $1$-forms $\omega$ and $\theta$ in
\eqref{eq:weierstrass-data} are called the {\em canonical one forms\/}
for the flat front corresponding to $X$, see \cite{KRUY1}.
\begin{definition}\label{def:l-data}
 A pair of holomorphic functions $\varphi=(\varphi_1,\varphi_2)$
 on $\overline{\D}_1$ is called  {\em non-degenerate\/}
 if $\varphi_1$ and $\varphi_2$ have no common zeroes.
 The  pair $(\varphi_1,\varphi_2)$ 
 given by \eqref{eq:weierstrass-data} is called
 the {\em holomorphic data} of $X$.
 The matrix valued function
 \begin{equation}\label{eq:mat-form}
    M_\varphi:=\frac{1}{\sqrt{2}}
        \begin{pmatrix}
	  0 & \varphi_1+\imag\varphi_2 \\
	 \varphi_1 -\imag\varphi_2 & 0
	\end{pmatrix}
 \end{equation}
 is called {\em the matrix form\/} of the pair $\varphi$.
\end{definition}

\subsection{The Main Lemma}
To state the lemma,
we define the matrix norm $|A|$ of a $2\times 2$-matrix $A$ as
\begin{equation}\label{eq:mat-norm}
    |A|:=\sqrt{\trace(AA^*)}
        = \sqrt{\sum_{i,j=1,2}|A_{ij}|^2}
	\qquad \bigl(A=(A_{ij})_{i,j=1,2}\bigr).
\end{equation}
Note that if $A\in\SL(2,\C)$, 
then $|A|\geq \sqrt{2}$ holds.
The equality holds if and only if $A$ is the identity matrix.

For a vector $\vect{v}=(v_1,v_2)\in\C^2$, we set
$|\vect{v}|=\sqrt{|v_1|^2+|v_2|^2}$.
\begin{mainlemma*}\label{lem:main}
Let $X\colon{}\overline{\D}_1\to \SL(2,\C)$
be a holomorphic Legendrian immersion
 $X\colon{}\overline{\D}_1\to \SL(2,\C)$
 satisfies the following properties{\rm:}
\begingroup
 \renewcommand{\theenumi}{{\rm(\arabic{enumi})}}
 \renewcommand{\labelenumi}{{\rm(\arabic{enumi})}}
 \begin{enumerate}
 \item\label{assumption:1}
       $X(0)=\id$, where $\id$ is the identity matrix.
 \item\label{assumption:2}
$\overline{\D}_1$ contains the 
geodesic disc 
of radius $\rho$
centered at the origin
with respect to the induced metric
$ds^2_X$. 
 \item\label{assumption:3}
       There exists a number  $\tau>\sqrt{2}$ such that
       $|X|\leq \tau$ holds on  $\overline{\D}_1$.
 \end{enumerate}
\endgroup
 Then, for any positive numbers $\varepsilon$ and $s$, 
 there exists a holomorphic Legendrian immersion
 $Y\colon{}\overline{\D}_1\to \SL(2,\C)$
 such that
\begingroup
 \renewcommand{\theenumi}{{\rm(\roman{enumi})}}
 \renewcommand{\labelenumi}{{\rm(\roman{enumi})}}
 \begin{enumerate}
 \item\label{conclusion:1}
       $Y(0)=\id$,
 \item\label{conclusion:2}
$\overline{\D}_1$ contains the 
geodesic disc of radius $\rho+s$
centered at the origin
with respect to the induced metric
$ds^2_Y$, 
 \item\label{conclusion:3}
       $|Y|\leq \tau\sqrt{1+32s^2+\varepsilon}$ in  $\overline{\D}_1$,
 \item\label{conclusion:4}
       $|Y-X|<\varepsilon$ and
       $|\varphi_Y-\varphi_X|<\varepsilon$ in $\D_{1-\varepsilon}$,
       where $\varphi_X$ and  $\varphi_Y$ denote 
       holomorphic data of $X$ and $Y$,
       respectively.
 \end{enumerate}
\endgroup
\end{mainlemma*}
The main theorem in the introduction is obtained
by the same argument as \cite[Section 3.4]{MUY}).

\subsection{Key Lemma}
Now we state the key lemma, as an analogue of 
\cite[Key Lemma in page 129]{MUY}.
The main lemma in the previous subsection can be obtained
directly form the key lemma.

We work on the Nadirashvili's labyrinth \cite{Nadi}. 
Let us give  a brief description of this labyrinth:
Let $N$ be a (sufficiently large) positive number.
For $k=0,1,2,\dots,2N^2$, we set
\begin{equation}\label{eq:rj}
    r_k = 1-\frac{k}{N^3}
    \qquad
    \left(
      r_0=1, r_1=1-\frac{1}{N^3},\dots,r_{2N^2}=1-\frac{2}{N}
    \right),
\end{equation}
and let 
\begin{equation}\label{eq:dj}
    \D_{r_k}=\{z\in\C\,;\,|z|<r_k\} \quad \text{and}\qquad
    S_{r_k}=\partial \D_{r_k}
               =\{z\in\C\,;\,|z|=r_k\}.
\end{equation}
We define an annular domain $\A$ as
\begin{equation}\label{eq:all-annulus}
 \A := \D_1\setminus \D_{r_{2N^2}}=\D_1\setminus \D_{1-\frac{2}{N}},
\end{equation}
and 
\begin{gather*}
  A := \bigcup_{k=0}^{N^2-1} \D_{r_{2k}}\setminus \D_{r_{2k+1}},\quad
  \widetilde A := 
    \bigcup_{k=0}^{N^2-1} \D_{r_{2k+1}}\setminus  \D_{r_{2k+2}},\\
  L = \bigcup _{k=0}^{N-1}l_{\frac{2k\pi}{N}},\qquad
  \widetilde L = \bigcup _{k=0}^{N-1}l_{\frac{(2k+1)\pi}{N}},
\end{gather*}
where  $l_{\theta}$ is the ray
$l_{\theta}=\{re^{\imag\theta}\,;\,r\geq 0\}$.
Let $\Sigma$ be a compact set defined as 
\[
  \Sigma:= L \cup \widetilde L \cup S,\qquad
  S = \bigcup_{j=0}^{2N^2} \partial \D_{r_j}
    = \bigcup_{j=0}^{2N^2} S_{r_j},
\]
and define a compact set $\Omega$ by
\[
 \Omega = \A \setminus U_{{1}/{(4N^3)}}(\Sigma),
\]
where $U_{\varepsilon}(\Sigma)$ denotes 
the $\varepsilon$-neighborhood
(of the Euclidean plane $\R^2=\C$) of $\Sigma$.
Each connected component of $\Omega$ has width $1/(2N^{3})$.
For each number $j=1,\dots,2N$, we set
\begin{align*}
 \omega_j&:= 
   \bigl(l_{\frac{j\pi}{N}}\cap \A\bigr)
    \cup 
   \bigl(\text{
     connected components of $\Omega$
     which intersect with $l_{\frac{j\pi}{N}}$}\bigr){,}\\
 \varpi_j&:=U_{{1}/{(4N^3)}}(\omega_j) .
\end{align*}
Then $\omega_j$'s are compact sets.

\begin{keylemma*}
 Assume that a holomorphic Legendrian immersion
 $\L=\L_0\colon{}\overline{\D}_1\to\SL(2,\C)$
 satisfies{\rm:}
\begingroup
 \renewcommand{\theenumi}{{\rm(A-\arabic{enumi})}}
 \renewcommand{\labelenumi}{{\rm(A-\arabic{enumi})}}
 \begin{enumerate}
  \item\label{ass:1}
         $\L(0)=\id$,
  \item\label{ass:2}
$\overline{\D}_1$ contains the geodesic disc 
of radius $\rho$ centered at the origin
with respect to the metric $ds^2_{\L}$.
 \end{enumerate}
\endgroup
 Then for any positive number $\varepsilon$ and positive 
 number $s\in (0,{1}/{3})$, 
 there exists a sufficiently large integer $N$ and
 a sequence of holomorphic Legendrian immersions
 $\L_0=\L$, $\L_1$, \dots, $\L_{2N}$ of $\overline\D_1$ such that
\begingroup
 \renewcommand{\theenumi}{{\rm(C-\arabic{enumi})}}
 \renewcommand{\labelenumi}{{\rm(C-\arabic{enumi})}}
 \begin{enumerate}
  \item\label{concl:1}
         $\L_j(0)=\id$ {\rm(}$j=0,\dots,2N${\rm)},
  \item\label{concl:2}
         for each $j=1,\dots,2N$,
         $|\varphi_j-\varphi_{j-1}|<\varepsilon/(2N^2)$
         holds on $\D_1\setminus\varpi_j$,
         where $\varphi_j$ is the non-degenerate holomorphic data of $\L_j$,
  \item\label{concl:3}
        for each $j=1,\dots,2N$,
        \[
	    |\varphi_j|\geq \begin{cases}
			  c N^{3.5}\qquad &\text{on $\omega_j$}\\
			  c N^{-0.5}\qquad &\text{on $\varpi_j$}
			 \end{cases}
	\]
        holds, where  $c$ is a positive constant depending only on
        $\L=\L_0$,
  \item\label{concl:4}
$\overline{\D}_1$ contains the geodesic disc 
of radius $\rho+s$ centered at the origin
with respect to the metric $ds^2_{\L_{2N}}$,
  \item\label{concl:5}
       on  $\overline{\DD}_g$ as in \ref{concl:4},
       it holds that
       \[
	  |\L_{2N}|\leq 
          \left(\max_{\overline{\D}_1}|\L_0|\right)
           \sqrt{1+32s^2+({b}/{\sqrt{N}})},
       \]
       where $b$ is a positive constant depending only on  $\L=\L_0$.
 \end{enumerate}
\endgroup
\end{keylemma*}
The proof is given in  Section~\ref{sec:proof}.

\section{Proof of the Key Lemma}\label{sec:proof}
\subsection{Flat fronts in hyperbolic $3$-space}
We denote by $H^3$ the hyperbolic $3$-space, that is, 
the connected and simply connected $3$-dimensional space form of
constant sectional curvature $-1$, which is represented as
\begin{align}\label{eq:hyperbolic}
   H^3 &= \SL(2,\C)/\SU(2)
       = \{aa^*\,;\,a\in\SL(2,\C)\}\\
       &= \{X\in\Herm(2)\,;\,\det X=1, \trace X>0\},
       \qquad (a^*=\trans{\bar a}).\nonumber
\end{align}
where $\Herm(2)$ is the set of $2\times 2$ Hermitian matrices.
Identifying $\Herm(2)$ with the Lorentz-Minkowski $4$-space 
$\R^4_1$ as
\begin{equation}\label{eq:herm}
    \Herm(2)\ni
    \begin{pmatrix}
     x_0 + x_3 & x_1+\imag x_2 \\
     x_1-\imag x_2 & x_0-x_3
    \end{pmatrix}
    \quad
    \longleftrightarrow
    \quad
    (x_0,x_1,x_2,x_3)\in\R^4_1,
\end{equation}
the hyperbolic space $H^3$ can be considered as the 
connected component of the two-sheeted hyperboloid
\[
   \{(x_0,x_1,x_2,x_3)\in\R^4_1\,;\,
      -(x_0)^2+(x_1)^2+(x_2)^2+(x_3)^2, x_0>0\}.
\]
A Legendrian immersion $\L\colon{}\overline{\D}_1\to \SL(2,\C)$
induces a flat front
\[
   l=\L\L^*\colon{}\overline{\D}_1\longrightarrow H^3.
\]
Here flat fronts in $H^3$ are flat surfaces
with certain kind of singularities, see Section~\ref{sub:flat-front}.
The pull-back of the metric of $H^3$ by $l$ is computed as
\begin{equation}\label{eq:hyp-metric}
    ds^2_l:= |\omega|^2+|\theta|^2 +\omega\theta+\bar\omega\bar\theta 
           = |\omega+\bar\theta|^2
    \qquad
    \left(
      \L^{-1}d\L = \begin{pmatrix}
		     0 & \theta \\
		    \omega & 0
		   \end{pmatrix}
    \right),
\end{equation}
which is positive semi-definite and may degenerate.
On the other hand, let $ds^2_{\L}$ be the pull-back of the canonical
Hermitian metric of $\SL(2,\C)$ by $\L$.
Then by \eqref{eq:induced-metric}, we have
\[
    ds^2_{\L}-\frac{1}{2}ds^2_l
    =\frac{1}{2}\bigl(
                 |\omega|^2+|\theta|^2 -\omega\theta-\bar\omega\bar\theta 
                \bigr)
    =\frac{1}{2}|\omega-\bar\theta|^2\geq 0,
\]
and hence 
\begin{equation}\label{eq:metric-estimate}
      ds^2_l\leq 2ds^2_{\L}
\end{equation}
holds.
For any path $\gamma$ in $\overline{\D}_1$ joining 
$x$ and $y\in \D_1$, it holds that
\begin{equation}\label{eq:length-estimate}
 \Length_{ds^2_l}\gamma:=
  \int_{\gamma}ds_l \geq \dist_{H^3}\bigl(l(x),l(y)\bigr),
\end{equation}
where $\dist_{H^3}$ denotes the distance in the hyperbolic $3$-space.
On the other hand, 
\begin{equation}\label{eq:dist}
 2\cosh \dist_{H^3}\bigl(o,l(x)\bigr) =|\L(x)|^2
\end{equation}
holds (see \cite[Lemma A.2]{MUY}), where
we set
\[
     o:=\id=
	         \begin{pmatrix}
		  1 & 0 \\
		  0 & 1
		 \end{pmatrix},
\]
namely $o$ is the point on $H^3$ which 
corresponds to the origin of the Poincar\'e ball
model.

\subsection{Inductive construction of  $\L_j$'s}
In this section, we describe the recipe to construct 
a sequence $\L_0$, $\L_1$, \dots, $\L_{2N}$ in 
Key Lemma.
Assume 
$\L_0$, \dots, $\L_{j-1}$ are already obtained, and we 
shall now construct $\L_j$ as follows:
Let
\begin{equation}\label{eq:zeta-j}
   \zeta_j:=\left(
          1-\frac{2}{N}-\frac{4}{N^3}
         \right)e^{\imag\pi j/N}
\end{equation}
be the base point of the compact set $\omega_j$
given in \cite[Fig.\ 1]{MUY}.
We set
\[
    E_0(z):=\L_{j-1}(\zeta_j)^{-1}\,\L_{j-1}(z),
    \qquad
    f_0(z) := E_0(z)E_0^*(z).
\]
That is, $E_0$ is  the Legendrian immersion 
with the same holomorphic data as $\L_{j-1}$ 
such that $E_0(\zeta_j)=\id$,
and $f_0$ the corresponding flat front.
Here, if we write 
\begin{equation}\label{eq:initial}
    f_0(0) = \begin{pmatrix}
	    \xi_0 + \xi_3 & \xi_1 + \imag \xi_2\\
	    \xi_1-\imag \xi_2 & \xi_0-\xi_3
	     \end{pmatrix}\in H^3,
\end{equation}
 then there exist real numbers $t$ and $\hat \xi_1$
 such that
 \begin{equation}\label{eq:matrix-a}
     af_0(0)a^* = 
     \begin{pmatrix}
	    \xi_0 + \xi_3 & \hat \xi_1\\
	    \hat \xi_1 & \xi_0-\xi_3
     \end{pmatrix}\in H^3
   \qquad
   \left(
     a=\begin{pmatrix}
	e^{\imag t} & 0 \\
	0 & e^{-\imag t}
       	\end{pmatrix}\in \SL(2,\C)\right).
 \end{equation}
 We now set
 \begin{equation}\label{eq:rotate}
   E := a E_0 a^*, \qquad
   \psi := E^{-1}E'.
 \end{equation}
 Then it holds that
 \[
   \psi=a^* \psi_{j-1} a \qquad (\psi_{j-1}:=\L_{j-1}^{-1}\L_{j-1}').
 \]
 Let 
 $\varphi=(\varphi_1,\varphi_2)$ be the
 holomorphic data induced from $E$
 such that $\psi=M_\varphi$ (see \eqref{eq:mat-form}).
 Applying Lemma~\ref{lem:runge} in the appendix to 
 this $\varphi$,
 we get
 a new holomorphic data $\tilde\varphi$.
 Take $\tilde E$ such that
 \begin{equation}\label{eq:def-ode}
     \tilde E^{-1}\tilde E'=\tilde\psi,\qquad
     \tilde E(\zeta_j)=\id
     \qquad(\tilde\psi=M_{\tilde\varphi}).
 \end{equation}

 Finally, we set
 \begin{equation}\label{eq:L-j}
   \L_j := a^*\{\tilde E(0)\}^{-1} \tilde E a,
 \end{equation}
 which gives the desired Legendrian immersion.

\subsection{Estimation of interior distance}
Applying the inductive constriction,
we get a sequence of
Legendrian immersion $\L_{1},\cdots,\L_{2N}$.
Then one can prove
 \ref{concl:1},
 \ref{concl:2}, \ref{concl:3} and \ref{concl:4}
by the exactly same argument as in
 \cite[Page 129]{MUY}.
 In fact, although the data $\varphi$ (a pair of holomorphic functions)
 is different from that in \cite{MUY} (a triple of 
 holomorphic functions),
the proof of the key lemma
(as in \cite[Page 129]{MUY}) 
only needs the norm $|\varphi|$ of the data $\varphi$.
In \cite{MUY}, we were working on the metric
$ds^2_{\mathcal B_j}=|\psi_j|^2\, dz\,d\bar z$.
In this paper, we now use the metric
$ds^2_{\L_j}=|\psi_j|^2\, dz\,d\bar z$,
where $\psi_j$ is given in \eqref{eq:rotate}.
If one replaces $|\psi_j|$ in \cite{MUY} by
this new $|\psi_j|$ as above,
then the completely same argument as \cite{MUY} works 
in our case.

\subsection{Extrinsic distance}
Thus, the only remaining assertion we should prove
is \ref{concl:5} of Key Lemma. We shall now prove it:
By \ref{concl:4}, $(\overline{\D_1},ds^2_{\L_{2N}})$ contains 
a geodesic disc $\overline{\DD}_g$ centered at origin with radius  $\rho+s$.
By the maximum principle,
it is sufficient to show that for  each $p\in \partial\DD_g$,
 it holds that
\begin{equation}\label{eq:concl}
   |\L_{2N}(p)|\leq 
          \left(\max_{\overline{\D}_1}|\L_0|\right)
           \sqrt{1+32s^2+({b}/{\sqrt{N}})}
	    \qquad (p\in \partial {\DD}_g),
\end{equation}
 where $b$ is a positive constant depending only on the initial
 immersion $\L_0$.

 If $p\in \partial \DD_g$ is not in  $\varpi_1\cup\dots \cup \varpi_{2N}$,
 the same argument as \cite[Page 129]{MUY} implies the conclusion.
 So, it is sufficient to consider the
case that there exists $j\in\{1,...,2N\}$
such that
\begin{equation}\label{eq:boundary}
   p \in \partial \DD_g \cap \varpi_j.
\end{equation}
We fix such a $p$. 
{\it From now on, the symbols $c_k$ $(k=1,2,\dots)$
 denote  suitable positive constants,
 which depend only on the initial data $\L=\L_0$.}

Like as in the proof of the inequality
\cite[(4.8)]{MUY}, we can apply
\cite[Corollary A.6]{MUY} 
for $X:=\L_j$ and $Y:=\L_{j-1}$.
Then  we have
 \begin{equation}\label{eq:safe-est}
    \dist_{H^3}\bigl(l_j(z),l_{j-1}(z)\bigr)\leq
    \frac{c_1\varepsilon}{2N^2}
    \qquad \text{(on $\D_1\setminus\varpi_j$)},
 \end{equation}
where $l_j:=\L_j\L_j^*$ and $l_{j-1}:=\L_{j-1}\L_{j-1}^*$.
Taking \eqref{eq:safe-est} into account,
inequality \eqref{eq:concl}
reduces to the following inequality
\begin{equation}\label{eq:concl-2}
   |\L_{j}(p)|\leq 
          \left(\max_{\overline{\D}_1}|\L_0|\right)
           \sqrt{1+32s^2+({c_2}/{\sqrt{N}})}
	    \qquad (p\in \partial \DD_g\cap \varpi_j).
\end{equation}

 Take the $ds^2_{\L_{2N}}$-geodesic $\gamma_0$ joining 
 $p$ and the origin $0\in\overline{\D}_1$ and 
 denote by $\hat p$ the first point on $\gamma_0$ which 
 meets $\partial\varpi_j$.
 Then we have
 \begin{equation}\label{eq:s-est}
   \dist_{ds^2_{\L_{2N}}} (\hat p,p)\leq s + \frac{c_3}{\sqrt{N}}.
 \end{equation}
 In fact, let $\gamma_1$ (resp.\ $\gamma_2$) be the subarc
 of $\gamma_0$ which joins $0$ and $\hat p$ (resp.\ $\hat p$ and $p$).
 Since $\gamma_0$ is the geodesic and $\DD_g$ is the disc 
 with radius $\rho+s$, \ref{concl:2} and \ref{concl:3} yields
 that
 \[
    \rho+s = \int_{\gamma_0} ds_{\L_{2N}}
           = \int_{\gamma_0}|\varphi_{2N}|\,|dz|
	   \geq  \frac{c_4}{\sqrt{N}}L_{\gamma_0},
 \]
 where $L_{\gamma_0}$ is the length of $\gamma_0$ with respect to the 
 Euclidean metric of $\C$.
 Thus, we have $L_{\gamma_0} \leq  c_6\sqrt{N}(\rho+s)$.
 On the other hand, 
 let $\sigma$ be the shortest
 line segment on $\C$ joining $\hat p$ and $\partial\D_1$.
 Then the Euclidean length $L_{\sigma}$ of $\sigma$ 
 satisfies $L_{\sigma}\leq c_7/N$.
 Thus, by \ref{ass:2}, we have
 \begin{align*}
    \rho+s &= \int_{\gamma_0}|\varphi_{2N}|\,|dz|
            \geq  \int_{\gamma_1}|\varphi_{0}|\,|dz|
              -\int_{\gamma_1}
                |\varphi_{2N}-\varphi_{0}|\,|dz|
              +\int_{\gamma_2}|\varphi_{2N}|\,|dz|\\
        &\geq \int_{\gamma_1\cup\sigma}
               |\varphi_0|\,|dz|
           -\int_{\sigma}
               |\varphi_0|\,|dz| - 
            L_{\gamma_0}\frac{\varepsilon}{N}
            +\dist_{ds^2{\L_{2N}}}(\hat p,p)\\
        &\geq \rho - \frac{c_3}{\sqrt{N}} + 
              \dist_{ds^2{\L_{2N}}}(\hat  p,p)
 \end{align*}
 which implies \eqref{eq:s-est}.

 Moreover, 
 since $\zeta_j$ and $\hat p$ can be joined 
 by a path $\gamma$ on 
 $\overline{\D_1}\setminus(\varpi_1\cup\dots\cup\varpi_{2N})$
 whose Euclidean length is not greater than $c_8/N$
 (see \cite[Fig.\ 1]{MUY}),
 we have
\begin{equation}\label{eq:boundary-distance}
   \dist_{ds^2_{\L_{2N}}} (\zeta_j,p)\leq s + \frac{c_9}{\sqrt{N}}
\end{equation}
 (see \cite[(4.9)]{MUY}).
 
\begin{lemma}\label{lem:est1}
 In the above setting, the following inequalities hold{\rm:}
 \begin{align}
    &|\L_{j-1}(\zeta_j)|^2\leq 
    \left(\max_{z\in\overline{\D}_1}|\L_0(z)|^2
    \right)\left(1+\frac{c_{10}}{N}\right),\label{est:1a}\\ 
    &\dist_{H^3}\bigl(l_{j-1}(\zeta_j),l_{j}(\zeta_j)\bigr)\leq
      \frac{c_{11}}{N^2} 
      \label{est:1b}.
 \end{align}
\end{lemma}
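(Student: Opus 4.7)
The plan is to reduce both estimates to one geometric observation about $\zeta_j$ relative to the labyrinth, combined with the safe estimate \eqref{eq:safe-est} and the identity \eqref{eq:dist} relating the matrix norm $|\L|$ to the hyperbolic distance from the base point $o=\id$. The key point is that the inductive modification from $\L_{k-1}$ to $\L_k$ is concentrated in $\varpi_k$, so if $\zeta_j$ lies outside every $\varpi_k$, then the error accumulated at $\zeta_j$ is small at each step.

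First, I would verify the geometric fact that $\zeta_j\notin\varpi_k$ for every $k=1,\dots,2N$. Since $|\zeta_j|=r_{2N^2}-4/N^3=1-2/N-4/N^3$, the point $\zeta_j$ lies strictly inside $\D_{r_{2N^2}}$, while each $\omega_k\subset\A=\D_1\setminus\D_{r_{2N^2}}$ has distance at least $4/N^3$ from $\zeta_j$. Since $\varpi_k=U_{1/(4N^3)}(\omega_k)$ and $4/N^3>1/(4N^3)$, this forces $\zeta_j\notin\varpi_k$. Inequality \eqref{est:1b} is then immediate from \eqref{eq:safe-est} applied at $z=\zeta_j$ for the single step $\L_{j-1}\mapsto\L_j$; one may take $c_{11}:=c_1\varepsilon/2$.

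For \eqref{est:1a}, I would iterate \eqref{eq:safe-est} at $z=\zeta_j$ over all $k=1,\dots,j-1$ (legitimate by the geometric fact above) and sum via the triangle inequality in $H^3$:
\[
   \dist_{H^3}\bigl(o,l_{j-1}(\zeta_j)\bigr)
   \leq \dist_{H^3}\bigl(o,l_0(\zeta_j)\bigr)+(j-1)\frac{c_1\varepsilon}{2N^2}
   \leq \dist_{H^3}\bigl(o,l_0(\zeta_j)\bigr)+\frac{c_1\varepsilon}{N}.
\]
Converting via \eqref{eq:dist} and using the elementary inequality $\cosh(A+B)\leq e^{|B|}\cosh A$, I would conclude
\[
   |\L_{j-1}(\zeta_j)|^2
   \leq |\L_0(\zeta_j)|^2\,e^{c_1\varepsilon/N}
   \leq \Bigl(\max_{\overline{\D}_1}|\L_0|^2\Bigr)\left(1+\frac{c_{10}}{N}\right)
\]
for $N$ sufficiently large, using $e^x\leq 1+2x$ for small $x$ to absorb the exponential factor.

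The only genuine obstacle is the geometric observation in the second paragraph: \eqref{eq:safe-est} is inapplicable inside $\varpi_j$, so the precise location of $\zeta_j$ at depth $4/N^3$ inside $\D_{r_{2N^2}}$, compared with the $1/(4N^3)$ thickness of each $\varpi_k$, is exactly what allows the iteration to go through at this single point. Once this is in place, the remainder is a routine summation of $O(N)$ contributions of size $O(\varepsilon/N^2)$ followed by the standard cosh-to-norm conversion provided by \eqref{eq:dist}.
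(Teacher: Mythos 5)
Your proposal is correct and follows essentially the same route as the paper's (very terse) proof: both estimates come from \eqref{eq:dist} together with the accumulated safe estimate \eqref{eq:safe-est}, the only point needing care being that $\zeta_j$ lies outside every $\varpi_k$, which your comparison of the depth $4/N^3$ with the thickness $1/(4N^3)$ correctly justifies. You have merely made explicit the iteration over $k=1,\dots,j-1$ and the $\cosh$-to-$(1+c_{10}/N)$ conversion that the paper leaves implicit.
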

\begin{proof}
 The first inequality holds because
 \[
   |\L_{j-1}(\zeta_j)|^2=2\cosh\bigl(\dist_{H^3}(o,l_{j-1}(\zeta_j)\bigr)
    \leq 2\cosh\left(\dist_{H^3}\bigl(o,l_{0}(\zeta_j)\bigr)
               +\frac{c_{12}}{N}\right), 
 \]
 see \eqref{eq:dist}.
 The second inequality directly follows from
 \eqref{eq:safe-est}.
\end{proof}

Now, recall the procedure constructing  $\L_{j}$ from $\L_{j-1}$:
Two Legendrian immersions $E$, $\tilde E$ 
in \eqref{eq:rotate} and \eqref{eq:def-ode}
are congruent to  $\L_{j-1}$, $\L_j$,
respectively, and satisfy $E(\zeta_j)=\tilde E(\zeta_j)=\id$.
Take flat fronts $f:=EE^*$ and $\tilde f=\tilde E\tilde E^*$ 
associated to $E$ and $\tilde E$, respectively.
By a choice of the matrix $a$ in \eqref{eq:matrix-a},
the points $f(\zeta_j)=o(=\id)$ and $f(0)$ lie on the 
``$x_1x_3$-plane'' $\Pi$, i.e.
\[
   \Pi:=\left\{
         \begin{pmatrix}
	  x_0+x_3 & x_1\\
	  x_1 & x_0-x_3
	 \end{pmatrix}\in H^3\,;\,x_0,x_1,x_3\in\R
        \right\}.
\]
Let $\Lambda$ be the geodesic of $H^3$ passing through the 
origin  $o$ ($=\id$) and perpendicular to $\Pi$
(i.e.,  the $x_2$-axis), and let
$q$ be the foot of the perpendicular from $\tilde f(p)$ to 
 the line $\Lambda$.
\begin{lemma}\label{lem:est2}
 In the above circumstances, one has{\rm:}
 \[
    \dist_{H^3}\bigl(o,q\bigr)\leq 2s + \frac{c_{13}}{\sqrt{N}}. 
 \]
\end{lemma}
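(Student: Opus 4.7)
The plan is to bound $\dist_{H^3}(o,q)$ by controlling the length of a short path from $\zeta_j$ to $p$, measured first in the $\SL(2,\C)$-induced metric $ds^2_{\L_j}$, then in the flat-front metric $ds^2_{\tilde f}$, and finally by orthogonally projecting to the geodesic $\Lambda$. The estimate \eqref{eq:boundary-distance} already controls the length in $ds^2_{\L_{2N}}$, so the essential input is the passage back to $ds^2_{\L_j}$.

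First, I would upgrade \eqref{eq:boundary-distance} to the metric $ds^2_{\L_j}$. By the telescoping inequality $|\varphi_j-\varphi_{2N}|\le\sum_{k=j+1}^{2N}|\varphi_k-\varphi_{k-1}|$, condition \ref{concl:2} gives $|\varphi_j-\varphi_{2N}|\le\varepsilon/N$ on $\overline{\D}_1\setminus\bigcup_{k>j}\varpi_k$. The concatenated path $\gamma\cup\gamma_2$ already constructed in the proof of \eqref{eq:boundary-distance} lies in this set: $\gamma$ was chosen to avoid every $\varpi_k$, and $\gamma_2\subset\varpi_j$ is disjoint from $\varpi_k$ for $k\ne j$ by the geometric arrangement of the labyrinth. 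Since the Euclidean length of the path is $O(\sqrt{N})$, integrating the pointwise correction over the path contributes only $O(1/\sqrt{N})$, so
\[
   \dist_{ds^2_{\L_j}}(\zeta_j,p)\le s+\frac{c_{14}}{\sqrt{N}}.
\]

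Next, by \eqref{eq:L-j} the immersion $\L_j$ differs from $\tilde E$ only by constant $\SL(2,\C)$-multiplications on both sides, so their induced metrics agree: $ds^2_{\tilde E}=ds^2_{\L_j}$. Applying \eqref{eq:metric-estimate} to $\tilde E$ yields $ds^2_{\tilde f}\le 2\,ds^2_{\L_j}$, hence
\[
   \dist_{ds^2_{\tilde f}}(\zeta_j,p)\le\sqrt{2}\,\dist_{ds^2_{\L_j}}(\zeta_j,p)\le\sqrt{2}\,s+\frac{c_{15}}{\sqrt{N}}.
\]
Because $\tilde f(\zeta_j)=o$, inequality \eqref{eq:length-estimate} applied to $\tilde f$ gives $\dist_{H^3}(o,\tilde f(p))\le\dist_{ds^2_{\tilde f}}(\zeta_j,p)$. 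Finally, since $q$ is the foot of the perpendicular from $\tilde f(p)$ to the geodesic $\Lambda\ni o$, the hyperbolic Pythagorean identity
\[
   \cosh\dist_{H^3}(o,\tilde f(p))=\cosh\dist_{H^3}(o,q)\cdot\cosh\dist_{H^3}(q,\tilde f(p))
\]
forces $\dist_{H^3}(o,q)\le\dist_{H^3}(o,\tilde f(p))$. Using $\sqrt{2}\,s\le 2s$ to absorb the slack (and taking $N$ large in terms of $\varepsilon,s$ to absorb the $\varepsilon$-dependent part of $c_{15}$ into the gap $(2-\sqrt{2})s$), the target bound $\dist_{H^3}(o,q)\le 2s+c_{13}/\sqrt{N}$ follows with $c_{13}$ depending only on $\L_0$.

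The main delicate point is the first step, where one must verify that the telescoping estimate on $|\varphi_j-\varphi_{2N}|$ is applicable along the entire path $\gamma\cup\gamma_2$; this uses the geometric arrangement of Nadirashvili's labyrinth, namely that $\gamma$ avoids every $\varpi_k$ and that $\gamma_2$ stays inside $\varpi_j$, which is disjoint from the other $\varpi_k$'s. Everything else is routine: \eqref{eq:metric-estimate}, \eqref{eq:length-estimate}, and hyperbolic trigonometry.
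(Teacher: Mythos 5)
Your proof is correct and follows essentially the same route as the paper: reduce to $\dist_{H^3}\bigl(o,\tilde f(p)\bigr)$ via the right angle at $q$, pass to the intrinsic flat-front length using \eqref{eq:metric-estimate} and \eqref{eq:length-estimate}, and control $\dist_{ds^2_{\L_j}}(\zeta_j,p)$ by comparing $\varphi_j$ with $\varphi_{2N}$ along the path underlying \eqref{eq:boundary-distance}. The only cosmetic difference is that you use $\tilde f(\zeta_j)=o$ directly, whereas the paper routes the same chain of inequalities through $l_{j-1}(\zeta_j)$ and absorbs the resulting discrepancy with \eqref{est:1b}; both give the stated bound.
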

\begin{proof}
 The triangle $\triangle oq\tilde f(p)$ is a right triangle such that 
 the angle $q$ is the right angle.
 Then by  \eqref{est:1b}, \eqref{eq:boundary-distance} and
 \eqref{eq:metric-estimate}, we have
\allowdisplaybreaks{%
 \begin{align*}
  \dist_{H^3}\bigl(o,q\bigr)&\leq 
  \dist_{H^3}\bigl(o,\tilde f(p)\bigr)=
  \dist_{H^3}\bigl(l_{j-1}(\zeta_j),l_j(p)\bigr)\\
  &\leq 
  \dist_{H^3}\bigl(l_{j-1}(\zeta_j),l_j(\zeta_j)\bigr)+
  \dist_{H^3}\bigl(l_{j}(\zeta_j),l_j(p)\bigr)
  \leq \frac{c_{11}}{N}+\dist_{ds^2_{l_j}}(\zeta_j,p)\\
  &\leq \frac{c_{11}}{N}+2\dist_{ds^2_{\L_j}}(\zeta_j,p)
   \leq \frac{c_{14}}{N}+2\dist_{ds^2_{\L_{2N}}}(\zeta_j,p)\\
  & \leq \frac{c_{15}}{N}+2\left(s+\frac{c_8}{\sqrt{N}}\right)
  \leq 2s + \frac{c_{13}}{\sqrt{N}}.
 \end{align*}%
}
 Thus we have the conclusion.
\end{proof}

\begin{lemma}\label{lem:length-est}
 Under the hypotheses above, one has{\rm:}
 \[
    \dist_{H^3}\bigl(q,\tilde f(p)\bigr)\leq 14s^2 + 
       \frac{c_{10}}{\sqrt{N}}.
 \]
\end{lemma}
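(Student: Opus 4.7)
\emph{Plan.} The strategy is to reduce the claim to a coordinate estimate in the Lorentz model of $H^3$ and then apply hyperbolic Pythagoras to the right triangle $\triangle o q \tilde f(p)$. Writing $\tilde f(p) = (\tilde x_0, \tilde x_1, \tilde x_2, \tilde x_3)$ in the Hermitian identification $H^3 \subset \R^4_1$ of \eqref{eq:herm}, the geodesic $\Lambda$ is parameterized by $t \mapsto (\cosh t, 0, \sinh t, 0)$, and minimizing
$\cosh\dist_{H^3}(\tilde f(p), \Lambda(t)) = \tilde x_0 \cosh t - \tilde x_2 \sinh t$ over $t$ yields the identity
\[
  \sinh^2 \dist_{H^3}\bigl(q, \tilde f(p)\bigr) = \tilde x_1^2 + \tilde x_3^2.
\]
It therefore suffices to bound $\tilde x_1^2 + \tilde x_3^2$ by $\sinh^2(14 s^2 + c_{10}/\sqrt N)$.

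Next, I apply the hyperbolic Pythagoras identity $\cosh\dist_{H^3}(o, \tilde f(p)) = \cosh\dist_{H^3}(o, q) \cosh\dist_{H^3}(q, \tilde f(p))$ for the right triangle at $q$. Writing $d = \dist_{H^3}(o, \tilde f(p))$, $a = \dist_{H^3}(o, q)$ and $b = \dist_{H^3}(q, \tilde f(p))$, it rearranges to
\[
  \sinh^2 b = \frac{\cosh^2 d - \cosh^2 a}{\cosh^2 a}.
\]
Combined with the bound $a \leq 2s + O(1/\sqrt N)$ from Lemma~\ref{lem:est2} and the analogous bound $d \leq 2s + O(1/\sqrt N)$ obtained exactly as in the proof of Lemma~\ref{lem:est2} using \eqref{eq:length-estimate}, \eqref{eq:metric-estimate} and \eqref{eq:boundary-distance}, the quadratic bound on $b$ reduces to the sharper near-equality $a \geq d - O(s^2 + 1/\sqrt N)$, that is, to showing that $\tilde f(p)$ lies very close to $\Lambda$ compared with its distance from $o$.

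This near-equality, which is the geometric heart of the lemma and the main obstacle, rests on two ingredients. First, because $\tilde\psi = \tilde E^{-1}\tilde E'$ is anti-diagonal by Proposition~\ref{prop:weierstrass}, the tangent plane of $\tilde f$ at $o = \tilde f(\zeta_j)$ lies inside the $(x_1, x_2)$-subspace of $T_o H^3$; Taylor-expanding $\tilde f$ along a curve from $\zeta_j$ to $p$ of $ds^2_{l_j}$-length at most $\sqrt 2(s + c/\sqrt N)$ (via \eqref{eq:metric-estimate} and \eqref{eq:boundary-distance}) forces $\tilde x_3$ to be of second order in that length. Second, the rotation $a$ in \eqref{eq:matrix-a} is chosen so that $f(0) \in \Pi$, and combined with the preferred direction of the Runge-type perturbation of $\varphi$ on $\omega_j$ supplied by the appendix Runge-type lemma, this orients the dominant displacement of $\tilde f$ on $\omega_j$ along $\Lambda$ rather than transverse to it, so that $\tilde x_1$ is also of second order in the length.

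Making these two heuristics quantitative with sharp enough constants is the hard part: one has to track the various factors of $2$ and $\sqrt 2$ coming from the metric comparison \eqref{eq:metric-estimate} and from the Taylor remainders of $\cosh$ and $\exp_o$, and one has to use the lower bound on $|\varphi_{2N}|$ in \ref{concl:3} of the Key Lemma to control the length integrals appearing in the Taylor remainders. Feeding the resulting quadratic estimates on $\tilde x_1$ and $\tilde x_3$ back into the Pythagoras identity will produce the claimed inequality $\dist_{H^3}(q, \tilde f(p)) \leq 14 s^2 + c_{10}/\sqrt N$.
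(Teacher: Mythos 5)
Your reduction to the coordinate identity $\sinh^2\dist_{H^3}\bigl(q,\tilde f(p)\bigr)=\tilde x_1^2+\tilde x_3^2$ and your two guiding observations (anti-diagonality of $\tilde\psi$ kills the $x_3$-component to first order; the preferred direction of the Runge perturbation controls the $x_1$-component) reproduce the skeleton of the paper's argument, and the hyperbolic-Pythagoras detour in the middle is harmless but buys nothing. The genuine gap is that the quantitative core, which you explicitly defer (``making these two heuristics quantitative with sharp enough constants is the hard part''), is the entire content of the lemma. The paper supplies it by writing $E(p)=\id+F(p)+\Delta$ and $\tilde E(p)=\id+\tilde F(p)+\tilde\Delta$ with $F=\int_{\zeta_j}^{\,\cdot}\psi\,dz$, $\tilde F=\int_{\zeta_j}^{\,\cdot}\tilde\psi\,dz$, and invoking the second-order remainder estimate $|\tilde\Delta|\le\bigl[(\max_\gamma|\tilde E|)\int_\gamma|\tilde\psi|\,|dz|\bigr]^2$ from \cite[Appendix A.4]{MUY}; combined with $\int_\gamma|\tilde\psi|\,|dz|\le s+c/\sqrt N$ (from \eqref{eq:boundary-distance}) and $\max_\gamma|\tilde E|^2\le 2(1+4s^2+c/\sqrt N)$ (from \eqref{eq:dist}, Lemma~\ref{lem:est1} and \eqref{eq:boundary-distance}), this yields $|\tilde\Delta|\le 4s^2+c/\sqrt N$, and expanding the product $\tilde f(p)=(\id+\tilde F+\tilde\Delta)(\id+\tilde F^*+\tilde\Delta^*)$ term by term is what produces the constant $14$. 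Nothing in your plan produces a specific quadratic constant, and without the remainder formula the phrase ``second order in the length'' has no quantitative force.

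Moreover, your mechanism for the $x_1$-bound is misidentified and would fail as stated. The first-order displacement $\tilde h(p)=\tilde F(p)+\tilde F(p)^*$ generically has an $x_1$-component of size comparable to $s$, not $s^2$; the rotation $a$ of \eqref{eq:matrix-a} only places $f(0)$ on the plane $\Pi$ (which is needed later, for the right triangle $\triangle f(0)\,o\,q$ in the proof of the Key Lemma) and does not by itself orient the displacement along $\Lambda$. What actually rescues the estimate is the comparison with the \emph{unperturbed} integral $h=F+F^*$: the path $\gamma$ joining $\zeta_j$ and $p$ can be chosen of Euclidean length at most $c/N$ outside the labyrinth, so $|F(p)|\le c/N$ because $\varphi$ is bounded, while the $x_1$-component of $\tilde h-h$ equals $\sqrt2\int\Re(\tilde\varphi_1-\varphi_1)$ and is of order $1/N$ by condition \ref{runge:3} of Lemma~\ref{lem:runge}, since $|u_2/u_1|$ is of order $\sqrt{2/N}$. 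Without introducing $h$, the Euclidean-short path, and the boundedness of the original data, you cannot conclude $|\tilde x_1|\le 14s^2+c/\sqrt N$, and the lemma does not follow.
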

\begin{proof}
 Let $\varphi=(\varphi_1,\varphi_2)$ and 
 $\tilde\varphi=(\tilde\varphi_1,\tilde\varphi_2)$ 
 be the holomorphic data of the Legendrian immersions
 $E$ and $\tilde E$, respectively, that is,
 \begin{align*}
    \psi &:=E^{-1}E' =\frac{1}{\sqrt{2}}\begin{pmatrix}
			       0 & \varphi_1 + \imag \varphi_2\\
			      \varphi_1 - \imag \varphi_2 & 0
			     \end{pmatrix}=M_{\varphi},\\
    \tilde\psi &:=\tilde E^{-1}\tilde E= \frac{1}{\sqrt{2}}\begin{pmatrix}
			       0 & \tilde\varphi_1 + \imag \tilde\varphi_2\\
			      \tilde\varphi_1 - \imag \tilde\varphi_2 & 0
			     \end{pmatrix}=M_{\tilde\varphi}
 \end{align*}
 hold.
 Set
 \[
     F(z):= \int_{\zeta_j}^z \psi(z)\,dz,\qquad\text{and}\qquad
    \tilde F(z):= \int_{\zeta_j}^z \tilde\psi(z)\,dz.
 \]
We define two values $\Delta$ and $\tilde \Delta$
by 
 \[
    E(p) = \id + F(p)+\Delta,\qquad
    \tilde E(p) = \id + \tilde F(p)+\tilde \Delta.
 \]
Since  $E(\zeta_j)=\tilde E(\zeta_j)=\id$,
\cite[Appendix A.4]{MUY} yields that
 \[
    |\Delta|\leq 
    \left[
    \bigl(
     \max_{\gamma}|E|
    \bigr)
    \int_{\gamma}|\psi|\,|dz|
    \right]^2
     ,\quad
    |\tilde\Delta|\leq 
    \left[
    \bigl(
     \max_{\gamma}|\tilde E|
    \bigr)
    \int_{\gamma}|\tilde \psi|\,|dz|
    \right]^2.
 \]
 Here $\gamma$ is a path joining $\zeta_j$ and $p$
 as in \cite[Fig. 1]{MUY}.
 This argument is completely parallel to
 that in \cite[Page 132]{MUY}.
 Since the Euclidean length of $\gamma$
 in $\D_1$ is bounded by  $c_{17}/N$, we have
 \[
    |F(p)|\leq \int_{\gamma}|\psi|\,|dz|\leq \frac{c_{18}}{N}.
 \]
 On the other hand, let $\tilde\gamma$
 be the $ds^2_{\L_{j}}$-geodesic joining $\zeta_j$ and $p$.
 Then noticing that $ds^2_{\L_{j}}=ds^2_{\tilde E}$,
 \eqref{eq:boundary-distance} implies that 
 \[
    |\tilde F(p)|\leq \int_{\gamma}|\tilde\psi|\,|dz|
        =\int_{\gamma}ds_{\tilde F}\leq s+\frac{c_{19}}{\sqrt{N}}.
 \]
 On the other hand,
since
\allowdisplaybreaks{%
 \begin{align*}
    \max_{\gamma}|E|^2 &= 
    \max_{\gamma} 
      \left\{2 \cosh \dist_{H^3}\bigl(o,f(z)\bigr)\right\}\\
   &=\max_{\gamma}
       \left\{
       2 \cosh \dist_{H^3}\bigl(l_{j-1}(\zeta_j),l_{j-1}(z)\bigr)
       \right\}
   \leq 2\cosh \frac{c_{20}}{N}
    \leq 2\left(1+\frac{c_{21}}{N}\right),\\
    \max_{\gamma}|\tilde E|^2&\leq 
         2 \cosh \dist_{H^3}\bigl(o,\tilde f(p)\bigr)
       =2 \cosh \dist_{H^3}\bigl(l_{j-1}(\zeta_j),l_{j}(p)\bigr)\\
       &\leq 2 \cosh
          \{\dist_{H^3}\bigl(l_{j-1}(\zeta_j),l_{j}(\zeta_j)\bigr)+
               \dist_{H^3}\bigl(l_{j-1}(\zeta_j),l_{j}(p)\bigr)\}\\
    &   \leq  2\cosh\left(2s+\frac{c_{22}}{\sqrt{N}}\right)
       \leq 2\left(1+4s^2+\frac{c_{23}}{\sqrt{N}}\right),
 \end{align*}%
}
 we have
 \[
     |\Delta|\leq \frac{c_{24}}{N},\qquad
     |\tilde\Delta|\leq 4s^2 + \frac{c_{25}}{\sqrt{N}},
 \]
 whenever $s<1/3$.
 Now, we set
 \begin{align*}
    \begin{pmatrix}
     x_0 + x_3 & x_1+\imag x_2 \\
     x_1-\imag x_2 & x_0-x_3
    \end{pmatrix}:&= \tilde f(p)
    = (\id + \tilde F  +\tilde\Delta)(\id+\tilde F^*+\tilde\Delta^*)\\
    &=
    \id + \tilde F +\tilde F^* - F-F^*+\delta.
 \end{align*}
 Then, reasoning as in \cite[Page 133]{MUY}, we have
 \[
    |\delta| = |F+F^*+\tilde F\tilde F^*
     +\Delta+\tilde\Delta+\tilde\Delta\tilde F^*
   + \tilde F\tilde\Delta^*+\tilde\Delta\tilde\Delta^*|
     < 14 s^2 + \frac{c_{26}}{\sqrt{N}}.
 \]
 Moreover, if we define
 \begin{align*}
  h(z) &:= F(z)+F^*(z) = 
          \sqrt{2}\int_{\zeta_j}^z 
                         \begin{pmatrix}
			      0 & \Re\varphi_1+ \imag\Re\varphi_2 \\
			       \Re\varphi_1- \imag\Re\varphi_2 & 0
                         \end{pmatrix},\\
  \tilde h(z) &:= \tilde F(z)+\tilde F^*(z) = 
          \sqrt{2}\int_{\zeta_j}^z 
                         \begin{pmatrix}
			      0 & \Re\tilde\varphi_1+ \imag\Re\tilde\varphi_2 \\
			       \Re\tilde\varphi_1- \imag\Re\tilde\varphi_2 & 0
                         \end{pmatrix},
 \end{align*}
 the $x_3$-component of  $\tilde h-h$ vanishes, and 
 the $x_1$-component is
 \[
    \sqrt{2}\int\Re(\tilde\varphi_1-\varphi_1)
    = |\tilde h(z)-h(z)|\frac{u_2}{u_1}
    \leq \left(s+\frac{c_{27}}{\sqrt{N}}\right)\frac{u_2}{u_1}
    \leq \frac{c_{28}}{N},
 \]
because of the property of $\vect{u}$ in Lemma~\ref{lem:runge}.
 Thus, we have
 \[
     |x_1|\leq 14s^2 + \frac{c_{29}}{\sqrt{N}},\qquad
     |x_3|\leq 14s^2 + \frac{c_{30}}{\sqrt{N}}.
 \]
Since $\dist_{H^3}\bigl(\tilde f(p),q\bigr)$ is
 the distance between  $\tilde f(p)$ and $x_2$-axis,
 we have
 \[
   \dist_{H^3}\bigl(\tilde f(p),q\bigr)
    = \sinh^{-1}\sqrt{x_1^2+x_3^2}
    \leq 14s^2 + \frac{c_{16}}{\sqrt{N}},
 \]
 which proves the conclusion.
\end{proof}

\subsection{Proof of Key Lemma}
Note that for any positive numbers $x$ and $y$, it holds that
\begin{multline}\label{eq:cosh-est}
  \cosh(x+y)= \cosh x \cosh y + \sinh x \sinh y\\
            \leq \cosh x(\cosh y + \sinh y)= e^y\cosh x.
\end{multline}

By \eqref{eq:safe-est}, we have
\begin{equation}\label{eq:dist0}
 \dist_{H^3}\bigl(o,l_{j-1}(\zeta_j)\bigr)
    \leq \dist_{H^3}\bigl(o,l_0(\zeta_j)\bigr)+\frac{c_{31}}{N^2}.
\end{equation}
Under the situation here,  $f(\zeta_j)$ and $o(=\id)$ lie on 
the $x_1x_3$-plane, and 
$q$ is on the $x_2$-axis, the geodesic triangle 
$\triangle f(\zeta_j) o q$ in $H^3$
is a right triangle.
Then by the hyperbolic Pythagorean theorem, we have
\allowdisplaybreaks{%
\begin{align*}
 \cosh\dist_{H^3}\bigl(f(0),q\bigr)&=
 \cosh\dist_{H^3}\bigl(f(0),o\bigr)
 \cosh\dist_{H^3}\bigl(o,q\bigr)\\
 &=
 \cosh\dist_{H^3}\bigl(f(0),f(\zeta_j)\bigr)
 \cosh\dist_{H^3}\bigl(o,q\bigr)\\
 &=
 \cosh\dist_{H^3}\bigl(l_{j-1}(0),l_{j-1}(\zeta_j)\bigr)
 \cosh\dist_{H^3}\bigl(o,q\bigr)\\
 &=
 \cosh\dist_{H^3}\bigl(o,l_{j-1}(\zeta_j)\bigr)
 \cosh\dist_{H^3}\bigl(o,q\bigr)\\
 &=
 \cosh\left(\dist_{H^3}\bigl(o,l_{0}(\zeta_j)\bigr)+\frac{c_{31}}{N^2}\right)
 \cosh\dist_{H^3}\bigl(o,q\bigr)\\
 &
 \leq \exp\left(\frac{c_{31}}{N^2}\right)
      \cosh\dist_{H^3}\bigl(o,l_{0}(\zeta_j)\bigr)
      \cosh\dist_{H^3}\bigl(o,q\bigr)\\
 &= 
  \frac{1}{2}|\L_0(\zeta_j)|^2
      \exp\left(\frac{c_{31}}{N^2}\right)
      \cosh\dist_{H^3}\bigl(o,q\bigr)\\
 & \leq 
    \left(\frac{1}{2}\max_{\D_1}|\L_0|^2\right)
      \cosh\left(2s+\frac{c_{13}}{\sqrt{N}}\right)\left(1+\frac{c_{32}}{N^2}\right)
 \\
 &\leq 
    \left(\frac{1}{2}\max_{\D_1}|\L_0|^2\right)
    \left(1+\frac{c_{32}}{N^2}\right)
    \left(1+4s^2+\frac{c_{33}}{\sqrt{N}}\right)\\
 &\leq 
    \left(\frac{1}{2}\max_{\D_1}|\L_0|^2\right)
     \left(1+4s^2 + \frac{c_{34}}{\sqrt{N}}\right).
\end{align*}%
}
Thus, by using  Lemmas \ref{lem:est1}, 
\ref{lem:est2}, and \ref{lem:length-est},
we have
\begin{align*}
 \frac{1}{2}|\L_j(p)|^2 
  &=
  \cosh\dist_{H^3}\bigl(o,l_j(p)\bigr)=
  \cosh\dist_{H^3}\bigl(\tilde f(0),\tilde f(p)\bigr)\\
  &\leq
  \cosh 
    \left(
    \dist_{H^3}\bigl(\tilde f(0),f(0)\bigr) +
    \dist_{H^3}\bigl(f(0),q\bigr) +
    \dist_{H^3}\bigl(q,\tilde f(p)\bigr) 
    \right)\\
  &\leq 
  \cosh 
    \left(
    \dist_{H^3}\bigl(l_j(\zeta_j),l_{j-1}(\zeta_j)\bigr) +
    \dist_{H^3}\bigl(f(0),q\bigr) +
    \dist_{H^3}\bigl(q,\tilde f(p)\bigr) 
    \right)\\
  &\leq 
  \cosh 
    \left(
     \frac{c_{11}}{N^2}+
    \dist_{H^3}\bigl(f(0),q\bigr) +
    \dist_{H^3}\bigl(q,\tilde f(p)\bigr) 
    \right)\\
   &\leq 
    \exp\left(\frac{c_{11}}{N^2}\right)
    \cosh\left(
    \dist_{H^3}\bigl(f(0),q\bigr) +
    \dist_{H^3}\bigl(q,\tilde f(p)\bigr) 
    \right)\\
   &\leq 
    \left(1+\frac{c_{35}}{N^2}\right)
    \cosh\left(
    \dist_{H^3}\bigl(f(0),q\bigr) +
    \dist_{H^3}\bigl(q,\tilde f(p)\bigr) 
    \right)\\
   &\leq 
    \left(1+\frac{c_{35}}{N^2}\right)
    \exp\left(\dist_{H^3}\bigl(q,\tilde f(p)\bigr)\right)
    \cosh    \dist_{H^3}\bigl(f(0),q\bigr) \\
   &\leq 
    \left(1+\frac{c_{35}}{N^2}\right)
    \exp\left(14s^2+\frac{c_{16}}{\sqrt{N}}\right)
    \cosh    \dist_{H^3}\bigl(f(0),q\bigr) \\
   &\leq
    \left(1+\frac{c_{35}}{N^2}\right)
    \left(1+2\left(14s^2+\frac{c_{16}}{\sqrt{N}}\right)\right)
    \left(\frac{1}{2}\max_{\D_1}|\L_0|^2\right)
     \left(1+4s^2 + \frac{c_{36}}{\sqrt{N}}\right)\\
   &\leq 
    \left(\frac{1}{2}\max_{\D_1}|\L_0|^2\right)
    \left(1+32s^2+\frac{c_2}{\sqrt{N}}\right)
\end{align*}
which proves the conclusion.

\section{Applications}\label{sec:appl}
This section is devoted to prove some applications of the main theorem
as we stated in the introduction.

A smooth map $f\colon{}\D_1 \to M^3$ 
into a $3$-manifold $M^3$
is called a ({\em wave}) {\em front\/}
if there exists a Legendrian immersion
$L_f\colon{}\D_1\to P(T^*M^3)$ with respect 
to the canonical contact structure of the projective
cotangent bundle $\pi:P(T^*M^3)\to M^3$ such 
that $\pi\circ L_f=f$.
\subsection{Flat fronts in hyperbolic $3$-space}
\label{sub:flat-front}
Recall that
\[
   H^3:=\SL(2,\C)/\SU(2)=
        \{a a^*\,;\,a\in\SL(2,\C)\}\qquad (a^*=\trans{\bar a}).
\]
For a Legendrian immersion $\L\colon{}\D_1\to\SL(2,\C)$,
the projection
\begin{equation}\label{eq:flat-front}
   f:=\L\L^*\colon{}\D_1\longrightarrow H^3
\end{equation}
gives a {\em flat front\/} in $H^3$
 (see \cite{KUY,KRUY1} for the definition of
flat fronts).
We call $\L$ in \eqref{eq:flat-front} the {\em holomorphic lift\/}
of $f$.
A flat front $f$ is called {\em weakly complete\/}
if its holomorphic lift is complete 
with respect to the induced metric $ds^2_{\L}$
\cite{KRUY2,UY}.

Let $f\colon{}\D_1\to H^3$ be a flat front and $\L$ its holomorphic lift.
Take $\omega$ and $\theta$ as in \eqref{eq:hyp-metric} and set
\begin{equation}\label{eq:rho}
    \rho:= \frac{\theta}{\omega}\colon{}\D_1\longrightarrow \C\cup\{\infty\}.
\end{equation}
Then a point $z\in \D_1$ is a singular point if and only if
$|\rho(z)|=1$.
We denote by $\Sigma_f$ the singular set of $f$;
\[
   \Sigma_f:=\{z\in\D_1\,;\,|\rho(z)|=1\}.
\]
We have the following
\begin{theorem}\label{thm:flat-front}
 There exists a weakly complete flat front $f\colon{}\D_1\to H^3$ 
 whose image is bounded in $H^3$
 such that $\D_1\setminus\Sigma_f$ is open dense in $\D_1$.
\end{theorem}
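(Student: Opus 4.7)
The plan is to apply the Main Theorem to produce a complete holomorphic Legendrian immersion $\L\colon\D_1\to\SL(2,\C)$ with bounded image, and then to take the associated flat front $f:=\L\L^*\colon\D_1\to H^3$ as in Section~\ref{sub:flat-front}. Three properties must then be checked: weak completeness, boundedness in $H^3$, and open denseness of $\D_1\setminus\Sigma_f$.

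Weak completeness of $f$ is, by definition, equivalent to completeness of its holomorphic lift $\L$ with respect to $ds^2_{\L}$, which is exactly the conclusion of the Main Theorem. Boundedness of $f(\D_1)\subset H^3$ follows directly from the identity $|\L(z)|^2=2\cosh\dist_{H^3}(o,f(z))$ recalled in \eqref{eq:dist}: since $|\L|$ is bounded on $\D_1$, the hyperbolic distance $\dist_{H^3}(o,f(z))$ stays bounded and $f(\D_1)$ lies inside a geodesic ball of finite radius around $o$.

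The subtle point is open denseness of $\D_1\setminus\Sigma_f$. Since $\Sigma_f=\{z\in\D_1:|\rho(z)|=1\}$ with $\rho=\theta/\omega$ meromorphic, as soon as $\rho$ is not a unimodular constant the equation $|\rho|^2=1$ cuts out a proper real-analytic subset of $\D_1$ (away from the poles of $\rho$), and hence $\Sigma_f$ is nowhere dense. The only potential obstacle is the degenerate case $|\rho|\equiv 1$, which I would remove by the following right-multiplication trick. Set
\[
   \L':=\L B_t,\qquad
   B_t:=\begin{pmatrix} e^t & 0\\ 0 & e^{-t}\end{pmatrix}\in\SL(2,\C),
\]
for some real $t\neq 0$. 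A direct computation using \eqref{eq:weierstrass-data} yields
\[
   (\L')^{-1}d\L'=B_t^{-1}(\L^{-1}d\L)B_t
    =\begin{pmatrix} 0 & e^{-2t}\theta\\ e^{2t}\omega & 0\end{pmatrix},
\]
so $\L'$ is again a holomorphic Legendrian immersion with new ratio $\rho'=e^{-4t}\rho$ of modulus $e^{-4t}\neq 1$. Boundedness of $|\L'|$ is preserved by submultiplicativity of the Frobenius norm with respect to the fixed factor $B_t$, and completeness of $ds^2_{\L'}$ is preserved because $ds^2_{\L'}\geq\min(e^{4t},e^{-4t})\,ds^2_{\L}$. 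Consequently $f':=\L'(\L')^*$ is a weakly complete bounded flat front with $\Sigma_{f'}=\emptyset$, and satisfies every required condition.

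The main obstacle is therefore the single degenerate case $|\rho|\equiv 1$; every other assertion follows directly from the Main Theorem via the correspondence $\L\mapsto \L\L^*$. One could alternatively argue that the iterative Runge-type construction underlying the Main Theorem is generic enough to produce non-constant $\rho$ a priori, but the right-multiplication trick above offers a cleaner uniform fix.
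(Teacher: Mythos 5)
Your proposal is correct, and the first two assertions (weak completeness via the definition, boundedness via \eqref{eq:dist}) are handled exactly as in the paper. Where you genuinely diverge is the open-denseness of $\D_1\setminus\Sigma_f$. The paper disposes of the degenerate case $|\rho|\equiv 1$ \emph{upstream}: it chooses the initial immersion $\L_0$ of the iteration with $|\rho|$ non-constant and invokes the fact that the perturbations in the Main Lemma (condition (iv), $|\varphi_Y-\varphi_X|<\varepsilon$ on $\D_{1-\varepsilon}$) can be made small near the origin, so the limit inherits non-constancy of $|\rho|$. Your fix is \emph{downstream} and cleaner: right-multiplying the finished $\L$ by the constant matrix $B_t=\operatorname{diag}(e^t,e^{-t})$ conjugates $\L^{-1}d\L$ by $B_t$, rescales $(\omega,\theta)$ to $(e^{2t}\omega,e^{-2t}\theta)$, and hence multiplies $\rho$ by $e^{-4t}$; your verifications that this preserves the Legendrian condition, the immersion property, completeness (via $ds^2_{\L'}\geq \min(e^{4t},e^{-4t})\,ds^2_{\L}$) and boundedness (submultiplicativity of the norm \eqref{eq:mat-norm}) are all sound, and the preliminary observation that $\Sigma_f$ having interior forces $\rho$ to be a unimodular constant is the same analyticity argument the paper uses. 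What your route buys is independence from the internals of the construction: you need only the statement of the Main Theorem, not control over the iteration near the origin, at the cost of replacing ``$\Sigma_f$ nowhere dense'' by the stronger (and equally acceptable) ``$\Sigma_{f'}=\emptyset$'' in the degenerate case. Either argument proves the theorem.
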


\begin{proof}
 Let
 $\L\colon{}\D_1\to\SL(2,\C)$ be a Legendrian immersion 
 as in the main theorem, and  set 
 $f:=\L\L^*\colon{}\D_1\to H^3$, which gives
a flat front.
The boundedness of $f$
follows from that of $\L$, and the weak completeness of $f$
 follows from the completeness of $\L$.

Finally, we shall prove that $\D_1\setminus\Sigma_f$
is open dense:
If $\Sigma_f$ has an interior point, 
then $\rho$ in \eqref{eq:rho} satisfies $|\rho|=1$
identically because of the analyticity of $\rho$.
However, if we take a initial immersion so that
$|\rho|$ is not constant, then
the resulting bounded weakly complete flat front
has the same property, since our iteration
can be taken to be small enough
near the origin of $\D_1$.
\end{proof}

\subsection{Flat fronts in de Sitter $3$-space}
\label{sub:de-sitter}
The de Sitter $3$-space $S^3_1$ is the connected and simply connected
Lorentzian space form of constant curvature $1$, which
is represented as
\[
   S^3_1 = \SL(2,\C)/\SU(1,1)
         = \{ae_3a^*\,;\,a\in\SL(2,\C)\}
	 \qquad
	 \left(e_3:=\begin{pmatrix}
		     1 & \hphantom{-}0\\
		     0 & -1\end{pmatrix}\right).
\]
Let $\L\colon{}\D_1\to \SL(2,\C)$ be a Legendrian immersion.
Then the projection
\[
   f\colon{}\L e_3\L^*\colon{}\D_1\longrightarrow S^3_1
\]
gives a flat front.
We remark that $f$ is not an immersion at $z$ 
if and only if $|\rho(z)|=1$,
where $\rho=\theta/\omega$ as in \eqref{eq:rho}.
The singular set $\Sigma_f$
of $f$ is characterized by $|\rho(z)|=1$.
Similar to the case of flat fronts in $H^3$,
$f$ is said to be {\em weakly complete\/} if the metric
induced from the canonical Hermitian metric of $\SL(2,\C)$
by $\L$ is complete.
Then we have
\begin{theorem}\label{thm:de-sitter}
 There exists a weakly complete flat front $f\colon{}\D_1\to H^3$ 
 whose image is bounded in $S^3_1$ such 
 that $\D_1\setminus\Sigma_f$ is open dense in $\D_1$, where $\Sigma_f$
 is the set of singular points of $f$.
\end{theorem}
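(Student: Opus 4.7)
The plan is to mimic, almost verbatim, the proof of Theorem~\ref{thm:flat-front}, replacing the projection $\L\mapsto \L\L^*$ onto $H^3$ by the projection $\L\mapsto \L e_3\L^*$ onto $S^3_1$. First I would invoke the Main Theorem to obtain a complete holomorphic Legendrian immersion $\L\colon{}\D_1\to\SL(2,\C)$ whose image is contained in a prescribed bounded open set in $\SL(2,\C)$, and set $f:=\L e_3\L^*$, which is a flat front in $S^3_1$ by the discussion preceding the theorem.

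Boundedness of $f$ is immediate: since the map $\SL(2,\C)\ni a\mapsto a e_3 a^*\in S^3_1\subset \Herm(2)\cong\R^4_1$ is continuous and $\L(\D_1)$ is relatively compact in $\SL(2,\C)$, the image $f(\D_1)$ is relatively compact, hence bounded. Weak completeness of $f$ is also immediate from the definition given in Subsection~\ref{sub:de-sitter}: the induced metric $ds^2_{\L}$ on $\D_1$ coming from the canonical Hermitian metric of $\SL(2,\C)$ is complete by the Main Theorem, which is exactly the condition defining weak completeness.

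It remains to show that $\D_1\setminus\Sigma_f$ is open dense, where $\Sigma_f=\{z\in\D_1\,;\,|\rho(z)|=1\}$ and $\rho=\theta/\omega$ is the meromorphic ratio of the canonical one-forms of $\L$. Since $\rho$ is meromorphic, the function $\log|\rho|$ is pluriharmonic (on the open set where $\rho$ is finite and nonzero), and the identity principle forces $|\rho|\equiv 1$ on all of $\D_1$ as soon as $\Sigma_f$ has nonempty interior. So it suffices to arrange that $|\rho|\not\equiv 1$. I would start the inductive construction of the Key Lemma with an initial holomorphic Legendrian immersion $\L_0$ for which $|\rho_0|=|\theta_0/\omega_0|$ is non-constant in every neighbourhood of $0\in\D_1$. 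By conclusion \ref{concl:2} of the Key Lemma, the perturbation of the data $\varphi_j-\varphi_{j-1}$ can be taken arbitrarily small on an arbitrarily large sub-disc $\D_{1-\varepsilon}$; in particular, the iteration producing the final $\L$ can be arranged so that $\L$ coincides with $\L_0$ up to an arbitrarily small $C^1$-perturbation near the origin, and this perturbation preserves the non-constancy of $|\rho|$ on a neighbourhood of $0$. Consequently $\Sigma_f$ has empty interior.

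The main (and really the only) obstacle is the last step: ensuring that the iterative construction, which is designed to push the geodesic disc out to infinite intrinsic radius while keeping the extrinsic size bounded, does not by accident drive the ratio $\rho$ towards the unit circle on an open set. This is handled exactly as in Theorem~\ref{thm:flat-front}: fix at the outset an $\L_0$ whose $\rho_0$ is non-constant near $0$, then use the smallness of the perturbations guaranteed by \ref{concl:2} of the Key Lemma, applied on the fixed disc $\D_{1-\varepsilon}$, to keep $|\rho|$ non-constant there in the limit. This completes the proof.
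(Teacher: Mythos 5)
Your proposal is correct and follows essentially the same route as the paper: invoke the Main Theorem, project via $\L\mapsto \L e_3\L^*$, deduce boundedness and weak completeness directly from the corresponding properties of $\L$, and control the singular set by starting from an initial immersion with non-constant $|\rho|$ and using the smallness of the perturbations in the iteration near the origin, together with analyticity of $\rho$. The paper's own proof is just a two-line reduction to the argument of Theorem~\ref{thm:flat-front}, which is exactly the argument you reproduce.
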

\begin{proof}
 Let $\L\colon{}\D_1\to\SL(2,\C)$ be a Legendrian immersion
 as in the main theorem, and set $f:=\L e_3\L^*$.
 Then $f$ is a weakly complete flat front in $S^3_1$.
 Moreover, one can see that  $\D_1\setminus\Sigma_f$
 is open dense by the same argument as in Theorem~\ref{thm:flat-front}.
\end{proof}

See \cite{GMM2}, \cite{KU} and \cite{AE}
for the relationships between
flat surfaces 
and linear Weingarten surfaces
in $H^3$ or $S^3_1$.

\subsection{Improper Affine front in affine $3$-space}
\label{sub:affine}
A notion of {\em IA-maps\/}
in the affine $3$-space has been introduced 
by A. Mart\'{\i}nez in \cite{Martinez}.
IA-maps  are improper affine spheres
with a certain kind of singularities.
Since all of IA-maps
are wave fronts (see \cite{Nakajo, UY}),
we call them  {\em improper affine fronts}
(The terminology \lq improper affine fronts\rq\
 has been already used in
Kawakami-Nakajo \cite{KN}).
The precise definition of improper affine fronts
is given in \cite[Remark 4.3]{UY}.
The set of singular points $\Sigma_f$ of 
an improper affine front $f\colon{}\D_1\to\R^3$
is represented as
$\Sigma_f=\{z\in\D_1\,|\,|\rho(z)|=1\}$,
where $\rho(z):=dG/dF$.
In \cite{UY}, {\it weak completeness\/} of improper affine fronts
is introduced.
Then we have
\begin{theorem}\label{thm:affine}
 There exists a weakly complete improper affine front
 $f\colon{}\D_1\to\R^3$ whose image is bounded such that
 $\D_1\setminus\Sigma_f$ is open dense in $\D_1$,
 where $\Sigma_f$ is the set of the singular points of $f$.
\end{theorem}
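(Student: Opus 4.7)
The plan is to follow the same template as Theorems~\ref{thm:flat-front} and~\ref{thm:de-sitter}, but with the projection $\L\mapsto\L\L^*$ replaced by the Mart\'\i{}nez representation for improper affine fronts. First, I would invoke the Main Theorem together with the contact transformation $\C^3\to\SL(2,\C)$ recalled in the introduction: choosing the target bounded domain in $\SL(2,\C)$ small enough to lie in the image of this transformation, the complete bounded Legendrian immersion $\D_1\to\SL(2,\C)$ pulls back to a complete bounded holomorphic Legendrian immersion
\[
\Phi=(F,G,H)\colon{}\D_1\longrightarrow \C^3, \qquad dH + G\,dF \equiv 0.
\]
The pair $(F,G)$ then serves as holomorphic data for the Mart\'\i{}nez representation (see \cite{Martinez,UY,Nakajo}), producing an improper affine front $f\colon{}\D_1\to\R^3$ whose first two coordinates are algebraic in $F,G,\bar F,\bar G$ and whose third coordinate has the schematic form $(\text{algebraic in }F,G,\bar F,\bar G)+\Re\int G\,dF$. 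The singular set has the form $\Sigma_f=\{|\rho|=1\}$ with $\rho=dG/dF$, in agreement with the formulation of the theorem.

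Boundedness of $f(\D_1)$ is where the Legendrian structure is essential. The only potentially unbounded term in the Mart\'\i{}nez formula is $\int G\,dF$, but the contact condition on $\Phi$ reads $H=-\int G\,dF+\text{const}$, so this integral equals $-H$ up to an additive constant. Since $\Phi$ lies in a bounded region of $\C^3$, $H$ is bounded and hence $f(\D_1)$ is bounded. Weak completeness follows in the same spirit as in Theorems~\ref{thm:flat-front} and~\ref{thm:de-sitter}: on a bounded region of $\C^3$, divergence with respect to the metric pulled back from the canonical Hermitian metric on $\SL(2,\C)$ (equivalently, from the Euclidean metric of $\C^3$, since the image is bounded) is equivalent to divergence with respect to $|dF|^2+|dG|^2$, which is precisely the holomorphic-lift metric controlling weak completeness of $f$ in the sense of \cite{UY}.

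For the open-dense assertion, I would argue exactly as in the proof of Theorem~\ref{thm:flat-front}: if $\Sigma_f$ had an interior point, analyticity of $\rho$ would force $|\rho|\equiv 1$ on $\D_1$, which can be prevented by choosing the initial Legendrian immersion $\L_0$ in the Main Lemma so that $|\rho|$ is non-constant near the origin; this property is preserved by the iteration because each step of the construction in Section~\ref{sec:proof} perturbs the immersion only slightly near $0\in\D_1$.

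The delicate step is the boundedness argument, i.e.\ controlling the integral term in the Mart\'\i{}nez formula. This is precisely where the detour through $\C^3$ rather than staying directly in $\SL(2,\C)$ pays off, because the $\C^3$-Legendrian $\Phi$ already packages an antiderivative of $G\,dF$ as its third coordinate $H$, whose boundedness is built into the construction; without this observation, boundedness of $(F,G)$ alone would not \emph{a priori} control an antiderivative of their product.
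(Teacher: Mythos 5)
Your proposal is correct and follows essentially the same route as the paper: pass to a bounded complete Legendrian curve $(F,G,H)$ in $\C^3$, feed $(F,G)$ into the Mart\'\i{}nez representation, use the Legendrian relation to identify the integral term $\int F\,dG$ (up to the bounded quantity $FG$ and a constant) with $-H$ so that boundedness is automatic, deduce weak completeness from the equivalence of $ds^2_\L$ with $d\tau^2=|dF|^2+|dG|^2$ on a bounded set, and get the open-dense singular set by choosing the initial data with $|dG/dF|$ non-constant. The only cosmetic difference is that you write the contact condition as $dH+G\,dF=0$ while the paper uses $dH=-F\,dG$, which costs you one integration by parts but changes nothing since $F$ and $G$ are bounded.
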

\begin{proof}
 Let $\L=(F,G,H)$ be a complete bounded Legendrian immersion into $\C^3$.
 Since $\L$ is Legendrian, \eqref{eq:contact-c} yields that
 $dH = -F\,dG$.
 Here, by completeness of $\L$, the induced metric
 \[
   ds^2_\L=
     |dF|^2+|dG|^2+|dH|^2 
   = |dF|^2+|dG|^2+|FdG|^2 
   = |dF|^2 + (|F|^2+1)|dG|^2
 \]
 is complete.
 Moreover, since the image of $\L$ is bounded, we have
 \[
     ds^2_\L\leq C(|dF|^2+|dG|^2)\qquad
     (\text{$C> 0$ is a constant}).
 \]
 Thus, the metric
 \begin{equation}\label{eq:d-tau}
     d\tau^2 := |dF|^2 + |dG|^2
 \end{equation}
 is complete.
Hence, we have an improper affine front $f$
 using Martinez' representation formula \cite{Martinez}
 with respect to  $(F,G)$;
 \begin{align}
    f&= \left(G+\overline F,
            \frac{1}{2}(|G|^2-|F|^2)+
    \Re\left(GF-2\int F\,dG\right)\right)\label{eq:ia-map}\\
     &= \left(G+\overline F,
            \frac{1}{2}(|G|^2-|F|^2)+
            \Re\left(GF+2H\right)\right)\colon{}\D_1\to\R^3.
  \nonumber
 \end{align}
 Since $d\tau^2$ in \eqref{eq:d-tau} is complete,
 $f$ is weakly complete,
 by definition of weak completeness given in \cite{UY}.
 Boundedness of $f$ follows from that of $\L$.

 We next prove that $\D_1\setminus\Sigma_f$
 is open dense:
 If we take a initial immersion so that
 $|dG/dF|$ is not constant, then
 the resulting bounded weakly complete improper affine front
 has the same property, since 
 the singular set is characterized by
 $|dG/dF|=1$.
\end{proof}

%%%%%%%%%%%%%%%%%%%%%%%%%%%%%%%%%%%%%%%%%%%%%%%%%%
\begin{remark}\label{rem:cpt}
 As mentioned in the introduction,
 there exist no compact flat fronts in $H^3$ (resp.\ $S^3_1$).
 In fact, \cite[Proposition 3.6]{KUY} implies non-existence
 of compact flat front in $H^3$.
 On the other hand, suppose that
 there exists a
 flat front $f\colon{}\Sigma^2\to S^3_1$
 where $\Sigma^2$ is a compact $2$-manifold.
 Then the unit normal vector $\nu$ of $f$ 
 induces a flat front $\nu\colon{}\Sigma^2\to H^3$,
 which makes a contradiction.

 Next, we shall show the non-existence 
 of compact improper affine fronts as mentioned in the introduction.
 An improper affine front $f\colon{}\Sigma^2\to\R^3$ defined on a Riemann
 surface $\Sigma^2$ is represented as in \eqref{eq:ia-map},
 where $F$ and $G$ are holomorphic functions on $\Sigma^2$.
 If $\Sigma^2$ is compact, $G+\overline F$ is equal to
 a constant $c\in \C$
 because it is harmonic.
 Then we have that $G=-\overline F+c$, where
 the left-hand side is holomorphic whereas
 the right-hand side is anti-holomorphic. 
 Hence we can conclude that 
 $F$ and $G$ are both constants.
 Hence $d\tau^2$ as in \eqref{eq:d-tau}
 vanishes identically, which contradicts
 to the definition of improper affine fronts.
\end{remark}

%%%%%%%%%%%%%%%%%%%%%%%%%%%%%%%%%%%%%%%%%%%%%%%%%%
\appendix
\section{An application of Runge's Theorem}\label{app:runge}
To prove the Key Lemma, we prepare the following assertion,
which is an analogue of \cite[Lemma 4.1]{MUY}.

\begin{lemma}\label{lem:runge}
 Let 
 $\varphi=(\varphi_1,\varphi_2)$ be a non-degenerate pair
 of holomorphic functions on 
 $\overline{\D}_1$ {\rm(}see Definition~\ref{def:l-data}{\rm)},
 and let $\varepsilon>0$ and $N$ be a positive number and a sufficiently
 large integer $N$, respectively.
 Then for each $j$ {\rm($j=1,\dots,2N$)},
 there exists a non-degenerate pair
 $\tilde\varphi=(\tilde\varphi_1,\tilde\varphi_2)$
 of holomorphic functions  satisfying 
the following three conditions:
\begingroup
 \renewcommand{\theenumi}{{\rm(\alph{enumi})}}
 \renewcommand{\labelenumi}{{\rm(\alph{enumi})}}
 \begin{enumerate}
  \item\label{runge:1}
        $|\tilde\varphi-\varphi|<\frac{\varepsilon}{2N^2}$ on 
	$\overline\D_1\setminus \varpi_j$.
  \item\label{runge:2}
       $\displaystyle{
       |\tilde\varphi|\geq 
        \begin{cases}
	   C\,N^{3.5}\qquad &(\text{on $\omega_j$}), \\
	   C\,N^{-0.5}\qquad &(\text{on $\varpi_j$}),
	\end{cases}}$
       \newline
       where $C$ is a constant depending only on  $\varphi$.
  \item\label{runge:3}
       There exists a unit vector $\vect{u}=(u_1,u_2)\in\R^2$
       {\rm($|\vect{u}|=1$)}
       such that
       \[
	   \vect{u}\cdot (\varphi-\tilde\varphi)=0,\qquad
           |u_1|>1-\frac{2}{N},
       \]
       where we set 
       $\vect{v}\cdot\vect{w}=v_1w_1+v_2w_2$ for
       $\vect{v}=(v_1,v_2)$ and $\vect{w}=(w_1,w_2)$.
 \end{enumerate}
\endgroup
\end{lemma}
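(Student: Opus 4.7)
The plan is to construct $\tilde\varphi$ in the form
\[
   \tilde\varphi := \varphi + h\,\vect{v}, \qquad \vect{v} := (-u_2, u_1),
\]
where $\vect u=(u_1,u_2)\in\R^2$ is a unit vector to be chosen and $h\colon{}\C\to\C$ is a scalar holomorphic function produced by Runge's theorem. Since $\vect{u}\cdot\vect{v}=0$, condition \ref{runge:3} is automatic; and since $|\vect u|=1$, the Cauchy--Schwarz inequality yields the pointwise bound
\[
   |\tilde\varphi|\ge |\tilde\varphi\cdot\vect u|=|\varphi\cdot\vect u|,
\]
which is the route to the $N^{-1/2}$ lower bound in \ref{runge:2} on all of $\varpi_j$.

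First I would select $\vect u$. Parameterize admissible directions by $t\in\R$ via $\vect u=(1,t)/\sqrt{1+t^2}$, so that $|u_1|>1-2/N$ corresponds to $|t|<c_0/\sqrt{N}$ for a universal $c_0$. I need to find such a $t$ with $|\varphi_1+t\varphi_2|\ge C_1 N^{-1/2}$ on $\varpi_j$. Writing this quantity as $|\varphi_2|\cdot|t-w(z)|$ with $w:=-\varphi_1/\varphi_2$, the bad $t$'s form (roughly) the $C_1 N^{-1/2}/\inf_{\varpi_j}|\varphi_2|$-neighborhood in $\R$ of the set $\Re w(V_j)$, where $V_j:=\{z\in\varpi_j:\Im w(z)=0\}$. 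Because $\varpi_j$ is the $1/(4N^3)$-thickening of the piecewise-linear set $\omega_j$ of total length $O(1/N)$, the real-analytic curve $V_j$ has 1-dimensional Hausdorff measure $O(1/N)$, and by the uniform Lipschitz bound on $\Re w$ the image $\Re w(V_j)\subset\R$ has 1-dimensional Lebesgue measure $O(1/N)$. Thickening by $O(N^{-1/2})$ gives a bad set of total measure $O(N^{-1}+C_1 N^{-1/2})$, which is strictly smaller than the allowed interval of length $2c_0/\sqrt{N}$ provided $c_0$ is large and $C_1$ small enough. Isolated zeros of $\varphi_2$ in $\varpi_j$ are handled separately by non-degeneracy of $\varphi$, which forces $|\varphi_1|$ bounded below near them. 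This measure-theoretic selection is the main technical step.

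Next I would build $h$ by Runge approximation. Set $K:=(\overline{\D}_1\setminus\varpi_j)\cup\omega_j$, a compact subset of $\C$. Define the locally holomorphic function $g$ on a neighborhood of $K$ to be identically $0$ near $\overline{\D}_1\setminus\varpi_j$ and identically $M:=C_2 N^{3.5}$ near $\omega_j$. The labyrinth geometry (see \cite{Nadi,MUY}) is engineered so that every bounded component of $\C\setminus K$ meets $\varpi_j$, whence Runge's classical theorem produces an entire function $h$ with $|h|<\varepsilon/(2N^2)$ on $\overline{\D}_1\setminus\varpi_j$ and $|h-M|<1$ on $\omega_j$.

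Finally, the three conclusions are verified as follows. \ref{runge:1} follows from $|\tilde\varphi-\varphi|=|h|\,|\vect v|=|h|<\varepsilon/(2N^2)$ on $\overline{\D}_1\setminus\varpi_j$, and \ref{runge:3} is built into the construction ($\vect u\cdot h\vect v=0$). For \ref{runge:2}, on $\omega_j$ the triangle inequality gives
\[
   |\tilde\varphi|\ge|h|-|\varphi|\ge M-1-\sup_{\overline{\D}_1}|\varphi|\ge\tfrac12 C_2 N^{3.5}
\]
for large $N$, while on all of $\varpi_j$ the Cauchy--Schwarz bound combined with the selection of $\vect u$ gives $|\tilde\varphi|\ge|\varphi\cdot\vect u|\ge C_1 N^{-1/2}$. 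Non-degeneracy of $\tilde\varphi$ on $\overline{\D}_1$ follows from these lower bounds on $\varpi_j$ together with $|\tilde\varphi|\ge|\varphi|-\varepsilon/(2N^2)>0$ on $\overline{\D}_1\setminus\varpi_j$ (using the positive lower bound on $|\varphi|$ that comes from non-degeneracy of $\varphi$). The exponent $-1/2$ in \ref{runge:2} is exactly the balance point between the $O(N^{-1/2})$ width of admissible $t$-values forced by $|u_1|>1-2/N$ and the $O(N^{-1})$ 1-dim measure of bad real values of $-\varphi_1/\varphi_2$ in the labyrinth strip $\varpi_j$.
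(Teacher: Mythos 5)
Your construction is essentially the paper's, written in different coordinates. The paper rotates $\varphi$ by an angle $t$ with $\sin t\le\sqrt{2/N}$, multiplies the second rotated component by a Runge function $h$ with $h\approx 2N^4$ on $\omega_j$ and $h\approx 1$ off $\varpi_j$, and rotates back; the net perturbation is $(h-1)\hat\varphi_2\cdot(-\sin t,\cos t)$, i.e.\ exactly your ``scalar Runge function times a fixed real direction orthogonal to $\vect{u}=(\cos t,\sin t)$''. Consequently \ref{runge:1}, \ref{runge:3}, the $N^{3.5}$ bound on $\omega_j$, and non-degeneracy are verified the same way in both proofs, and your Cauchy--Schwarz observation $|\tilde\varphi|\ge|\tilde\varphi\cdot\vect{u}|=|\varphi\cdot\vect{u}|$ is the same mechanism the paper uses (there it reads $|\tilde\varphi|\ge|\check\varphi_1|=|\hat\varphi_1|$).

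The one place you genuinely diverge is the selection of $\vect{u}$, and that is also where your write-up has an error. You assert that $\omega_j$ has total length $O(1/N)$; it does not. The set $\omega_j$ is the radial segment together with $O(N^2)$ labyrinth arcs, each of Euclidean length comparable to $1/N$, so $\mathcal{H}^1(\omega_j)=O(N)$. The property that is actually true, and that the paper states and uses, is $\diam_{\R^2}(\varpi_j)\le c_1/N$. Your claim $\mathcal{H}^1(V_j)=O(1/N)$ therefore does not follow from the thickening description you give (and, as stated, the relevant set is $\{z\in\varpi_j: |\Im w(z)|<\delta\}$ rather than the level set $\{\Im w=0\}$). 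Fortunately the small diameter rescues you more cheaply than the level-set analysis: either $\varpi_j$ lies within a fixed distance of a zero of $\varphi_2$, in which case $|\varphi_1|$ is bounded below there and any admissible $t$ works, or $w=-\varphi_1/\varphi_2$ is Lipschitz on $\varpi_j$ with constant depending only on $\varphi$, so $\Re w(\varpi_j)$ lies in a single interval of length $O(1/N)$ and the bad set of $t$ is one interval of length $O(1/N)+O(C_1N^{-1/2})$, which fits inside the window $|t|<2/\sqrt{N}$ forced by $|u_1|>1-2/N$ once $C_1$ is small and $N$ is large. (Note you cannot ``take $c_0$ large''; it is pinned near $2$ by the constraint on $u_1$.) At that point your argument has collapsed to the paper's cleaner dichotomy: either $|\varphi_1|\ge\nu/(2\sqrt{N})$ everywhere on $\varpi_j$ (take $t=0$), or it is small at one point, hence by $\diam(\varpi_j)=O(1/N)$ and the bounded derivative of $\varphi$ small everywhere on $\varpi_j$, hence $|\varphi_2|\ge\nu(1-1/(4N))$ there and $\sin t=\sqrt{2/N}$ works. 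I recommend replacing the measure-theoretic step by this dichotomy; the rest of your proof then goes through.
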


As pointed out in Remark \ref{rmk:new_pr} below,
we can prove the lemma as a modification of
\cite[Lemma 4.1]{MUY} for null holomorphic curves
in $\C^3$.
However, we believe that
the theory of Legendrian curves should be
established independently from the theory of
null curves. 
So we give here a self-contained proof
of the lemma, which might be convenient
for the readers.
In fact, our proof is easier than
that of \cite[Lemma 4.1]{MUY}.

\begin{proof}
 In the proof, the symbols $c_k$ ($k=1,2,\dots$)
 denote  suitable positive constants,
 which depend only on the initial data $\varphi$.
Since $\varphi$ has no zeroes,
we can take $\nu$ and $m$ such that
 \begin{equation}\label{eq:phi-inf}
     0<\nu\leq |\varphi|\leq m \qquad (\text{on $\overline{\D}_1$}).
 \end{equation}
 Set 
 \[
    \hat\varphi = (\hat\varphi_1,\hat\varphi_2)
    = \bigl((\cos t) \varphi_1+(\sin t) \varphi_2,
            -(\sin t) \varphi_1+(\cos t) \varphi_2\bigr)
 \]
 for $t\in \R$.
To prove the lemma, we need to prove that 
 one can choose
 $t\in [0,\frac{\pi}{2})$ such that
 \begin{equation}\label{eq:t-cond}
    \sin t \leq 
           \sqrt{\frac{2}{N}}
             \qquad |\hat\varphi_k|\geq\frac{\nu}{2\sqrt{N}}
            \qquad(\text{on $\varpi_j$}, k=1,2).
 \end{equation}
 In fact, if $|\varphi_k|\geq \nu/(2\sqrt{N})$ $(k=1,2)$ holds
 on $\varpi_j$, \eqref{eq:t-cond} holds obviously for $t=0$.
By exchanging the roles of $\varphi_1$ and $\varphi_2$,
we may assume that there exists $x\in\varpi_j$
 such that $|\varphi_1(x)|<\nu/(2\sqrt{N})$ without loss
of generality.
Here, notice that the diameter (as a subset of $\C=\R^2$) of $\varpi_j$
 satisfies $\diam_{\R^2}(\varpi_j)\leq c_1/N$, where $c_1$ is a 
 positive constant.
 Since the derivative of $\varphi\colon{}\overline{\D}_1\to\C^2$ 
 is bounded,
 it holds that,
 $\diam_{\C^2}(\varphi(\varpi_j))\leq c_2/N$ 
 for $c_2>0$.
 Then  it holds that
 \[
    |\varphi_1(y)|\leq  |\varphi_1(x)|+|\varphi(y)-\varphi(x)|
              < \frac{\nu}{2\sqrt{N}}+\frac{c_2}{N}
 \]
 for any $y\in \varpi_j$.
 Here, noticing  $|\varphi|^2\geq \nu^2$,
 we have
 \[
    |\varphi_2(y)| \geq \sqrt{\nu^2-|\varphi_1(y)|^2}
              \geq \sqrt{\nu^2
                    -
                    \left(
                      \frac{\nu}{2\sqrt{N}}+\frac{c_2}{N}
                    \right)^2}
              \geq \nu\left(1 - \frac{1}{4N}\right)
 \]
 since $N$ is sufficiently large.
We choose $t$ as
 $\sin t = \sqrt{\frac{2}{N}}$.
 Then noticing
 \[
     1-\frac{2}{N}\leq \cos t \leq 1-\frac{1}{N},
 \]
 we have
 \begin{align}
    |\hat\varphi_1(y)|&=
          |(\cos t)\varphi_1(y)+(\sin t)\varphi_2(y)|
         \geq (\sin t)|\varphi_2(y)|-(\cos t) |\varphi_1(y)|
           \label{eq:phi-est}\\
         & \geq  \frac{\sqrt{2}\nu}{\sqrt{N}}\left(1-\frac{1}{4N}\right)
               -\left(1-\frac{1}{N}\right)
              \left(\frac{\nu}{2\sqrt{N}}+\frac{c_2}{N}\right)
            \geq\frac{\nu}{2\sqrt{N}},
  \nonumber\\
  |\hat\varphi_2(y)|&
         \geq (\cos t)|\varphi_2(y)|-(\sin t) |\varphi_1(y)|
  \nonumber
          \\
         &\geq \left(1-\frac{2}{N}\right)
              \left(1-\frac{1}{4N}\right)\nu-
              \sqrt{\frac{2}{N}}
              \left(\frac{\nu}{2\sqrt{N}}+\frac{c_2}{N}\right)
         \geq \frac{\nu}{2\sqrt{N}}.\nonumber
 \end{align}
 Hence we have \eqref{eq:t-cond}.

 We now fix a real number $t$ in \eqref{eq:t-cond} and will
prove (a), (b) and (c):
 Since $\omega_j$ and $\D_1\setminus\varpi_j$ are compact set
 such that $\C\setminus\bigl(\omega_j\cup (\D_1\setminus\varpi_j)\bigr)$
 is connect, 
 Runge's theorem implies that there exists a holomorphic function 
 $h$  (cf.\ \cite[(4)]{Nadi}) 
 such that $h\neq 0$ on $\overline{\D}_1$ and
 \begin{equation}\label{eq:runge-fct}
  \left\{
  \begin{alignedat}{2}
   |h-2N^4| &< \frac{1}{2N^2}\qquad &&(\text{on $\omega_j$})\\
   |h-1|    &<   \frac{\varepsilon}{2N^2 m}
                           &&(\text{on
   $\overline{\D}_1\setminus\varpi_j$}),
  \end{alignedat}
  \right.
 \end{equation}
 where $m$ is as in \eqref{eq:phi-inf}.
We set
 \[
     \check\varphi = (\check\varphi_1,\check\varphi_2)
         := (\hat\varphi_1,h\hat\varphi_2).
 \]
 Then 
 \[
    \tilde\varphi 
    := \bigl((\cos t) \check\varphi_1-(\sin t) \check\varphi_2,
      (\sin t) \check\varphi_1+(\cos t) \check\varphi_2\bigr)
 \]
 satisfies the desired properties:
 In fact, 
 \[
    |\tilde\varphi-\varphi|
    = |\check\varphi-\hat\varphi|
    = |h-1|\,|\hat\varphi_2|
    < \frac{\varepsilon}{2N^2m}|\varphi| \leq \frac{\varepsilon}{2N^2}
 \]
 which implies \ref{runge:1}. On the other hand,
by \eqref{eq:phi-est},
 \[
   |\tilde\varphi|=|\check\varphi|
                  \geq |\check\varphi_1| = |\hat\varphi_1|\geq 
                  \frac{\nu}{2\sqrt{N}}
 \]
 holds on $\varpi_j$, which proves the first inequality of (b).

It holds on $\omega_j$ that
 \[
   |\tilde\varphi|=|\check\varphi|
      \geq |h|\,|\hat\varphi_2|
      \geq \bigl|2N^4- |h-2N^4| \bigr|\, |\hat\varphi_2|
      \geq \left(2N^4-\frac{1}{2N^2}\right)\frac{\nu}{2\sqrt{N}}
      \geq \frac{\nu}{2}N^{3.5}.
 \]
Hence we have the second inequality of \ref{runge:2}.
Finally, we set $\vect{u}=(\cos t,\sin t)$.
Then \ref{runge:3} holds.
\end{proof}

\begin{remark}\label{rmk:new_pr}
 Lemma~\ref{lem:runge} can be proved directly from
 the corresponding assertion for null curves in $\C^3$
 given in \cite[Lemma 4.1]{MUY} as follows:
 Let $\varphi=(\varphi_1,\varphi_2)$ be as in Lemma~\ref{lem:runge}.
 Since $\varphi_1,\varphi_2$ have no common zeros,
 there exists a holomorphic function
 $\varphi_3$ defined on $\overline{\D}_1$ such that
 $(\varphi_1)^2+(\varphi_2)^2+(\varphi_3)^2=0$.
 We apply \cite[Lemma 4.1]{MUY} for
 $\Phi:=(\varphi_1,\varphi_2,\varphi_3)$ and get
 a new Weierstrass data 
 $\tilde \Phi =(\tilde\varphi_1,\tilde\varphi_2,\tilde\varphi_3)$.
 Then $\tilde \varphi:=(\tilde\varphi_1,\tilde\varphi_2)$
 satisfies \ref{runge:1} which follows immediately
 from (a) of \cite[Lemma 4.1]{MUY}.
 Next, by the proof of \cite[Corollary B]{MUY} 
 it holds that
 \[
    4(|\tilde \varphi_1|^2+|\tilde \varphi_2|^2)
      \ge |\tilde \varphi_1|^2+|\tilde \varphi_2|^2+|\tilde \varphi_3|^2,
 \]
 which implies that \ref{runge:2} of our lemma
 follows from (b) of \cite[Lemma 4.1]{MUY}.
 \ref{runge:3} of our lemma does not follows from 
 (c) of \cite[Lemma 4.1]{MUY} directly.
 However, we can choose $u=(u_1,u_2,u_3)$
 in the proof of \cite[Lemma 4.1]{MUY} 
 in such a way that $u\in \R^3$ and $u_3=0$
 without loss of generality.
 So this gives a alternative proof of
 the lemma. 
\end{remark}

\end{document}